\numberwithin{equation}{section}
\newtheorem{thm}{Theorem}[section]
\newtheorem{cor}[thm]{Corollary}
\newtheorem{question}[thm]{Question}
\newtheorem{lem}[thm]{Lemma}
\newtheorem{prop}[thm]{Proposition}
\newtheorem{defn}[thm]{Definition}
\newtheorem{rem}[thm]{Remark}
\newcommand\density[1]{$(#1)$-density}
\newcommand\fixdensity[1]{$#1$-density}
\def\stardensity{$*$-density}
\def\stardensity{$*$-density}
\def\stardense{$*$-dense}
\newcommand\intervalN[2]{[#1\cdots #2)}
\newcommand\intervalA[2]{[#1\cdots #2]}
\def\R{\mathbb{R}}
\def\Z{\mathbb{Z}}
\def\N{\mathbb{N}}
\def\ab{\hspace*{2pt}}
\def\ColN{{\mathsf{T}}}
\def\Col{{\mathsf{Col}}}
\def\Colfast{{\mathsf{Syr}}}
\def\positivenaturals{\mathbb{Z}^+}
\def\oddnaturals{D^+}
\def\cons{\frac{\sqrt{3}}{2}}
\title{An Approximation of the Collatz Map and a Lower Bound for its Average Total Stopping Time}
\author{Manuel Inselmann
}
\date{}
\begin{document}
	\maketitle
	\begin{abstract}
		\noindent
	Define the map $\mathsf{T}$ on the positive integers by $\mathsf{T}(m)=\frac{m}{2}$ if $m$ is even and by $\mathsf{T}(m)=\frac{3m+1}{2}$ if $m$ is odd. Results of Terras and Everett imply that, given any $\epsilon>0$, almost all $m\in\mathbb{Z}^+$ (in the sense of natural density) fulfill $(\frac{\sqrt{3}}{2})^km^{1-\epsilon}\leq \mathsf{T}^k(m)\leq (\frac{\sqrt{3}}{2})^km^{1+\epsilon}$ simultaneously for all $0\leq k\leq  \alpha\log m$ with $\alpha=(\log 2)^{-1}\approx 1.443$. We extend this result to $\alpha=2(\log\frac{4}{3})^{-1}\approx 6.952$, which is the maximally possible value.  Set $\mathsf{T}_{\min}(m):=\min_{n\in\N}\mathsf{T}^n(m)$. As an immediate consequence, one has $\mathsf{T}_{\min}(m)\leq\mathsf{T}^{\left\lfloor2(\log\frac{4}{3})^{-1}\log m\right\rfloor}(m)\leq m^{\epsilon}$ for almost all $m\in\mathbb{Z}^+$ for any given $\epsilon>0$. Previously, Korec has shown that $\mathsf{T}_{\min}(m)\leq m^\epsilon$ for almost all $m\in\mathbb{Z}^+$ if $\epsilon>\frac{\log3}{\log4}$, and recently Tao proved that $\mathsf{T}_{\min}(m)\leq f(m)$ for almost all $m\in\mathbb{Z}^+$ (in the sense of logarithmic density) for all functions $f$ diverging to $\infty$. 
 Denote by $\tau(m)$ the minimal $n\in\mathbb{N}$ for which $\mathsf{T}^n(m)=1$ if there exists such an $n$ and set $\tau(m)=\infty$ otherwise. As another application, we show that $\liminf_{x\rightarrow\infty}\frac{1}{x\log x}\sum_{m=1}^{\lfloor x\rfloor}\tau(m)\geq 2(\log\frac{4}{3})^{-1}$, partially answering a question of Crandall and Shanks. Under the assumption that the Collatz Conjecture is true in the strong sense that $\tau(m)$ is in $O(\log m)$, we show that $\lim_{x\rightarrow\infty}\frac{1}{x\log x}\sum_{m=1}^{\lfloor x\rfloor}\tau(m)= 2(\log\frac{4}{3})^{-1}$.
	\end{abstract}
	\section{Introduction}
	\subsection{Statement of the Main Result}	Let $\positivenaturals$ denote the set of positive integers. The \textbf{Collatz map} is defined by
	$$\Col:\positivenaturals\rightarrow\positivenaturals;\ab m\mapsto\begin{cases}
		\frac{m}{2}&\text{ if\ab\ab} m \text{\ab\ab is\ab\ab even,}\\
		3m+1&\text{ if\ab\ab} m \text{\ab\ab is\ab\ab odd.}
	\end{cases}$$  For  $m\in\positivenaturals$, define $\Col_{\min}(m)=\min_{n\in\N}\Col^n(m)$. The \textbf{Collatz conjecture} states that $\Col_{\min}(m)$ $=1$ for every $m\in\positivenaturals$, see \cite{lagarias} for an overview. The Collatz Conjecture is notoriously difficult and a proof of it seems well out of reach of current mathematics. 
	While it is implausible that probabilistic arguments will lead to a full proof of the Collatz Conjecture, some progress has been made by applying such arguments, for example in \cite{Terras1976}, \cite{EVERETT197742} and more recently in \cite{Tao}. In this paper, we also follow a probabilistic approach.
	
	  Note that whenever $m\in\positivenaturals$ is odd, then $\Col(m)$ is even. Thus, we define $$\ColN:\positivenaturals\rightarrow\positivenaturals;\ab m\mapsto\begin{cases}
	  	\frac{m}{2}&\text{ if\ab\ab} m \text{\ab\ab is\ab\ab even,}\\
	  	\frac{3m+1}{2}&\text{ if\ab\ab} m \text{\ab\ab is\ab\ab odd.}
	  \end{cases}$$ This map is sometimes called the $3x+1$-function (see \cite{kontorovich2009}). A heuristic approach suggests that when we iterate $\ColN$ for $k$ times, roughly half of the elements of the set $$\left\{\ColN^i(m)\mid 0\leq i<k\right\}$$  will be even and the elements of the other half will be odd. Thus, half of the applications of $\ColN$ will be division by $2$ and the other half will be multiplication with approximately $\frac{3}{2}$. Hence, we expect $\ColN^k(m)$ to be close to $\left(\cons\right)^km$ for most $m\in\positivenaturals$. Results of Terras and Everett in \cite{Terras1976} and \cite{EVERETT197742} imply that the above heuristic is correct as long as $0\leq k\leq \log_2m$. Probabilistic models and empirical data suggest that this heuristic can be extended to larger $k$, see \cite[Section 3.3]{lagarias}. Since $\ColN^k(m)\geq 1$ and $\left(\cons\right)^km\geq 1$ holds as long as $k\leq \frac{\log_2m}{1-\log_2\sqrt{3}} $, it seems reasonable to assume that the approximation $\ColN^k(m)\approx\left(\cons\right)^km$ can be extended from $0\leq k\leq \log_2m$ to $0\leq k\leq \frac{\log_2m}{1-\log_2\sqrt{3}} $.
  
  	Building on the work of Terras \cite{Terras1976} and Everett \cite{EVERETT197742}, the main result of this paper is that the approximation $\ColN^k(m)\approx\left(\cons\right)^km$ is indeed correct for $0\leq k\leq \frac{\log_2m}{1-\log_2\sqrt{3}}$ on a set of natural density $1$ (where a set $A\subseteq\positivenaturals$ is of \textbf{natural density} $1$ if $\liminf_{n\rightarrow\infty}\frac{\#\left\{m\in A\mid m\leq n\right\}}{n}=1$). 
	     The main result is:
		\begin{thm}\label{main}
		Suppose that $\epsilon>0$. Then the set $$\left\{m\in\positivenaturals\mid\forall 0\leq k\leq \frac{\log_2m}{1-\log_2\sqrt{3}}:  \left(\cons\right)^km^{1-\epsilon}\leq \ColN^k(m)\leq \left(\cons\right)^km^{1+\epsilon}\right\}$$ is of natural density $1$.
	\end{thm}

\subsection{An Outline of the Proof of the Main Result}
Before we give an outline of the proof of the main result, we define some frequently used notation and concepts.
	\begin{defn}
\begin{enumerate}
	\item 
	Let $F$ be a finite set. By $\#F$ we denote the number of its elements. If $F$ is non-empty, then $\mu_F$ denotes the \textbf{uniform probability measure} on $F$ defined by $\mu_F(B)=\frac{\#B}{\#F}$ for $B\subseteq F$.
	
\item 		Let $a,b\in\N$. We use $\intervalN{a}{b}$ to denote the set $[a,b)\cap\N$, and similarly, we denote with $\intervalA{a}{b}$  the set $[a,b]\cap\N$.

 \item To each $m\in\positivenaturals$, we associate its  \textbf{parity sequence} $(p(m)_k)_{k\in\N}$, which is defined by $$p(m)_k=\begin{cases}
	0& \text{if}\ab\ab \ColN^k(m)\text{\ab is\ab even},\\
	1& \text{if}\ab \ab \ColN^k(m) \text{\ab is\ab odd}.
\end{cases}$$
\item For $x\in \R$, let $\lfloor x\rfloor$  denote the greatest integer $\leq x$.
\item For $x\in \R$, let $\lceil x\rceil$  denote the least integer $\geq x$.
\item Let $f,g:\positivenaturals\rightarrow[0,\infty)$ be functions. We say that $f(m)$ is in $O(g(m))$ if there exists $C>0$ such that $f(m)\leq Cg(m)$ for all $m\in\positivenaturals$.
\end{enumerate}
\end{defn}
It was independently noticed by Terras (see \cite{Terras1976}) and Everett (see \cite{EVERETT197742}) that the parity sequences are distributed like a fair coin flip for the first $n$ steps in each set of the form $\intervalN{m}{m+2^n}$ for $m\in\positivenaturals,\ab n\in\N$. Hence, for most numbers $i\in\intervalN{m}{m+2^n}$, the value $\sum_{k=0}^{n-1}p(i)_k$ will be close to $\frac{n}{2}$. To make this more precise, let $\delta>0$. Then the subset of $i\in\intervalN{m}{m+2^n}$ for which $|\sum_{k=0}^{n-1}p(i)_k-\frac{n}{2}|\geq \delta n$ has less then $C2^{n(1-D)}$ elements for some constants $C>0$ and $0<D<1$ (see Lemma \ref{Hoeffding}). This is a consequence of Hoeffding's inequality (see \cite{Hoeffding}). Sets $S\subseteq \positivenaturals$ with the property that there exist constants  $C>0$ and $0<D\leq 1$ such that $\mu_{\intervalA{1}{N}}(S\cap\intervalA{1}{N})\geq 1-\frac{C}{N^D}$ for all $N\in\positivenaturals$ will be important in this paper. We will say that such a set is \textbf{\stardense} (see Definition \ref{DefStardense}). Note that any \stardense\ab set $S\subseteq \positivenaturals$ is of natural density $1$ as $\lim_{N\rightarrow\infty}\frac{C}{N^D}=0$ for any $D>0$. There are some properties that \stardense\ab sets share with sets of natural density $1$, e.g., finite intersections of \stardense\ab sets are \stardense\ab again. Another important property that will be crucial for the argument is the following.  If a set $S\subseteq \positivenaturals$ is \stardense, then for sufficiently small $\delta>0$ the set \begin{equation}\label{iteration}
	\left\{m\in\positivenaturals\mid \forall k\leq \lfloor\delta \log_3m\rfloor\forall 0\leq i< 2\cdot3^k:\ab 3^km+i\in S\right\}\end{equation} is \stardense\ab as well (see Lemma \ref{leverage}). Note that sets of natural density $1$ do not have this property.

From the initial uniform distribution of the parity sequences it follows that for every $\epsilon >0$ and $0<\alpha\leq 1$ the set \begin{equation}\label{estimationparity}
	\left\{m\in\positivenaturals\mid\forall\ab \lfloor\alpha\log_2m\rfloor\leq k\leq \lfloor\log_2m\rfloor:(\frac{1}{2}-\epsilon)k<  \sum_{i=0}^{k-1}p(m)_i< (\frac{1}{2}+\epsilon)k\right\}\end{equation} is \stardense\ab (see Lemma \ref{paritystar}).
We can write  $$\ColN^k(m)=\left(\frac{m}{2^k}+r_k(m)\right)\cdot 3^{\sum_{i=0}^{k-1}p(m)_i}$$ with  $r_k(m)=\sum_{i=0}^{k-1}\frac{p(m)_i}{3^{\sum_{j=0}^{i}p(m)_j}2^{k-i}}$ (see Lemma \ref{ColIter}). By (\ref{estimationparity}) we can estimate $\sum_{i=0}^{k-1}p(m)_i$ for large enough values of $k$ on a \stardense\ab set.
The next step is to give an estimate of $r_k(m)$ for a \stardense\ab set. In Lemma \ref{rest}, we show that the set \begin{equation}\label{estimationrkm}\left\{m\in\positivenaturals\mid\forall 0\leq k\leq \log_2m: r_k(m) 3^{\frac{k}{2}}m^{-\epsilon}<1\right\}\end{equation} is \stardense\ab for every $\epsilon>0$. In Lemma \ref{beginning}, we show that from the fact that (\ref{estimationparity}) and (\ref{estimationrkm}) are \stardense\ab and a coarse estimate of $\ColN^k(m)$ for small values of $k$ (see Lemma \ref{bruteforce}) it follows that 
 the set \begin{equation}\label{estimationstart}
\left\{m\in\positivenaturals\mid\forall 0\leq k\leq\log_2m:  \left(\cons\right)^km^{1-\epsilon}\leq \ColN^k(m)\leq \left(\cons\right)^km^{1+\epsilon}\right\} \end{equation} is \stardense\ab
 for every $\epsilon>0$.
This is as far as we can get using the uniform distribution of the parity sequence up to $\log_2m$. We know most $m$ in the interval $\intervalN{1}{N}$ are sent to values in $\intervalN{1}{\lfloor N^{\log_2\sqrt{3}}\rfloor}$ via $m\mapsto \ColN^{\lfloor \log_2m\rfloor}(m)$.  Also most $m$ in $\intervalN{1}{\lfloor N^{\log_2\sqrt{3}}\rfloor}$ are sent to values in $\intervalN{1}{\lfloor N^{(\log_2\sqrt{3})^2}\rfloor}$. But it is not immediately clear how one could make an iterative argument work because the distribution of the push-forward of the uniform measure on $\intervalN{M}{M+2^n}$ under $\ColN^n$ is not uniformly distributed. This is where the property that if $S$ is \stardense, then so is the set (\ref{iteration}), comes into play.
In Lemma \ref{imagesareclose}, we show that the set $$\left\{m\in\positivenaturals\mid \exists\ab 0\leq L\leq 2\eta \lfloor\log_2m\rfloor\exists\ab  0\leq i< 2\cdot 3^L: \ColN^{\lfloor\log_2m\rfloor}(m)=\left\lfloor\frac{3^{\left\lfloor(\frac{1}{2}-\eta)\lfloor\log_2m\rfloor\right\rfloor}}{2^{\lfloor\log_2m\rfloor}}m\right\rfloor3^L+i\right\}$$ is \stardense\ab for every $\eta>0$.
We combine this observation with (\ref{iteration}) for suitable $\eta,\delta>0$ and the fact that for any set $S\subseteq\positivenaturals$ that is \stardense, the set $$\left\{m\in\positivenaturals\mid \left\lfloor\frac{3^{\left\lfloor(\frac{1}{2}-\eta)\lfloor\log_2m\rfloor\right\rfloor}}{2^{\lfloor\log_2m\rfloor}}m\right\rfloor\in S\right\}$$ is \stardense\ab as well (see Lemma \ref{scale}). Thus, on the one hand, we know that for most $m$ we can write $$\ColN^{\lfloor\log_2m\rfloor}(m)=\left\lfloor\frac{3^{\left\lfloor(\frac{1}{2}-\eta)\lfloor\log_2m\rfloor\right\rfloor}}{2^{\lfloor\log_2m\rfloor}}m\right\rfloor3^L+i$$ for some small $L,i\in\N$. On the other hand, we know that for most $m$ we have $$\left\lfloor\frac{3^{\left\lfloor(\frac{1}{2}-\eta)\lfloor\log_2m\rfloor\right\rfloor}}{2^{\lfloor\log_2m\rfloor}}m\right\rfloor3^k+j\in S$$ for \textit{all} small $k,j\in\N$.
As a result, we conclude that if $S\subseteq \positivenaturals$ is \stardense, then the set \begin{equation}\label{denseiteration}
\left\{m\in\positivenaturals\mid \ColN^{\lfloor\log_2m\rfloor}(m)\in S\right\}\end{equation} is \stardense\ab as well (see Lemma \ref{Iterate}).
Now, the idea is to apply (\ref{denseiteration}) to the set (\ref{estimationstart}) and intersect the resulting set with (\ref{estimationstart}) (with suitable and possibly distinct choices of $\epsilon$ in (\ref{estimationstart}) in each case). This way, we can extend the range of $k$ from $0\leq k\leq \log_2m$ to $0\leq k\leq (1+\log_2\sqrt{3})\log_2m$. By iterating this operation, we can extend the range of $k$ to $$0\leq k\leq (\sum_{i=0}^n(\log_2\sqrt{3})^i)\log_2m$$ for any $n\in\N$. From this Theorem \ref{main} follows easily.  

\subsection{Applications of the Main Theorem} First, we note a reformulation of Theorem \ref{main}.
	\begin{thm}\label{reformnatural}
	Suppose that $\epsilon>0$. Then the set $$\left\{m\in\positivenaturals\mid\forall \lambda\in[0,1]:  m^{\lambda-\epsilon}\leq \ColN^{\left\lfloor\frac{1-\lambda}{1-\log_2\sqrt{3}}\log_2m\right\rfloor}(m)\leq m^{\lambda+\epsilon}\right\}$$ is of natural density $1$.
\end{thm}
From Theorem \ref{reformnatural} we immediately obtain the following Corollary:
\begin{cor}\label{upperbound}
	Suppose that $\epsilon>0$. Then the set $$\left\{m\in\positivenaturals\mid \ColN^{\left\lfloor\frac{\log_2m}{1-\log_2\sqrt{3}}\right\rfloor}(m)\leq m^{\epsilon}\right\}$$  is of natural density $1$.
\end{cor}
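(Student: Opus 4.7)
The plan is to derive Corollary \ref{upperbound} directly from Theorem \ref{main} by a short calculation. Setting $K(m) = \frac{1}{1-\frac{\log_2 3}{2}}\log_2 m$, the key identity is
\[
\Bigl(\tfrac{3^{1/2}}{2}\Bigr)^{K(m)} \;=\; 2^{K(m)\log_2(3^{1/2}/2)} \;=\; 2^{-\log_2 m} \;=\; \tfrac{1}{m},
\]
since $\log_2(3^{1/2}/2) = -(1-\tfrac{\log_2 3}{2})$. Thus the upper bound $(\tfrac{3^{1/2}}{2})^k m^{1+\epsilon}$ appearing in Theorem \ref{main} evaluates to exactly $m^{\epsilon}$ when $k = K(m)$, which is morally the statement of the corollary — modulo the minor issue that we must take $k$ to be an integer and that applying Theorem \ref{main} for exponent $\epsilon$ only yields exponent $\epsilon$ plus a rounding loss, so we will apply it for $\epsilon/2$ in order to close up the slack.

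I would then apply Theorem \ref{main} with $\epsilon/2$ in place of $\epsilon$; let $A_{\epsilon/2}$ denote the resulting density-one set. For any $m \in A_{\epsilon/2}$, plugging in the integer $k = \lfloor K(m)\rfloor$, which lies in the allowed range $[0, K(m)]$, gives
\[
\ColN^{\lfloor K(m)\rfloor}(m) \;\leq\; \Bigl(\tfrac{3^{1/2}}{2}\Bigr)^{\lfloor K(m)\rfloor} m^{1+\epsilon/2}.
\]
Because $3^{1/2}/2 < 1$ and $\lfloor K(m)\rfloor - K(m) \in (-1,0]$, we obtain $(\tfrac{3^{1/2}}{2})^{\lfloor K(m)\rfloor} \leq (\tfrac{3^{1/2}}{2})^{K(m)-1} = \tfrac{2}{3^{1/2}} \cdot \tfrac{1}{m}$, so the right-hand side is bounded by $\tfrac{2}{3^{1/2}}\, m^{\epsilon/2}$.

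Finally, for all $m$ exceeding a threshold depending only on $\epsilon$ one has $\tfrac{2}{3^{1/2}} m^{\epsilon/2} \leq m^{\epsilon}$, whence $\ColN^{\lfloor K(m)\rfloor}(m) \leq m^{\epsilon}$. Removing the finitely many exceptional $m$ from $A_{\epsilon/2}$ does not change the natural density, so the set in the corollary has density $1$, as required. There is no real obstacle in this argument; the only technical nuisance is the rounding of $K(m)$, which costs a constant factor $2/3^{1/2}$, and that loss is absorbed in the transition from the exponent $\epsilon/2$ used in Theorem \ref{main} to the exponent $\epsilon$ stated in the corollary.
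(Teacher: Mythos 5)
Your proof is correct and matches the route the paper intends: the corollary is obtained by specializing the upper bound of Theorem \ref{main} (applied with a smaller exponent, as in the paper's Theorem \ref{reform} with $\lambda=0$) at $k=\lfloor\frac{1}{1-\frac{\log_23}{2}}\log_2m\rfloor$, using $(\frac{3^{1/2}}{2})^{\frac{1}{1-\frac{\log_23}{2}}\log_2m}=\frac{1}{m}$ and absorbing the constant $\frac{2}{3^{1/2}}$ from the rounding into the slack $m^{\epsilon/2}\leq m^{\epsilon}$ for large $m$. No gaps.
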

In an attempt to tackle the Collatz Conjecture partially, one may ask the following:
\begin{question}\label{questionf}
	Suppose that $f:\positivenaturals\rightarrow[1,\infty)$ is a function. Is the set $\left\{m\in\positivenaturals\mid\Col_{\min}(m)\leq f(m)\right\}$ of natural density $1$?
\end{question}
As an example, consider the function $f_{\theta}:\positivenaturals\rightarrow(0,\infty);\ab m\mapsto m^\theta$ for $\theta>0$. Allouche showed that the answer to Question \ref{questionf} is positive for all $f_\theta$ with $\theta>\frac{3}{2}-\log_23$ (see \cite{Allouche78}). Korec extended this result to all $f_\theta$ with $\theta>\log_2{\sqrt{3}}$ (see \cite{Korec1994}). Recently, Tao answered Question \ref{questionf} positively for all functions $f:\positivenaturals\rightarrow[1,\infty)$ such that $\lim_{m\rightarrow\infty}f(m)=\infty$ with natural density replaced by logarithmic density (see \cite{Tao} ), where a set  $A\subseteq\positivenaturals$ is of \textbf{logarithmic density} $1$ if $$\liminf_{n\rightarrow\infty}\frac{\sum_{m\in A,\ab m\leq n}\frac{1}{m}}{\sum_{m=1}^{n}\frac{1}{m}}=1.$$ As there are subsets of $\positivenaturals$ with logarithmic density $1$ and lower natural density $0$ (where a set  $A\subseteq\positivenaturals$ is of \textbf{lower natural density} $0$ if $\liminf_{n\rightarrow\infty} \frac{\#A\cap\intervalA{1}{n}}{n}=0$), it would be desirable to get Tao's result for natural density as well.
Now, Corollary \ref{upperbound} implies that the answer to Question \ref{questionf} is positive for  $f_\theta$ for any $\theta>0$, thus extending Korec's result to all $f_\theta$ with $\log_2{\sqrt{3}}\geq\theta>0$, and improving Tao's result for these functions by replacing logarithmic density with natural density. Corollary \ref{upperbound} can be used to answer Question \ref{questionf} positively for some functions that are in $O(m^\theta)$ for every $\theta>0$ as well, but it seems unlikely that the methods of this paper would suffice to prove the full natural density version of Tao's result. 

As another Corollary of Theorem \ref{main} we obtain:

\begin{thm}\label{oddandeven}
	Let $\epsilon>0$. Then the set  $$\left\{m\in\positivenaturals\mid\forall 0\leq k\leq \frac{\log_2m}{1-\log_2\sqrt{3}}:  -\epsilon\log_2m\leq\sum_{i=0}^{k-1}p(m)_i-\frac{k}{2}\leq \epsilon\log_2m\right\}$$ is of natural density $1$.
\end{thm}

For $m\in\positivenaturals$, define its \textbf{total stopping time} $\tau(m)$ to be the minimal $n\in\N$ such that $\ColN^n(m)=1$ if such an $n$ exists and set $\tau(m)=\infty$ otherwise.  Crandall and Shanks conjectured that $$\frac{1}{x}\sum_{m=1}^{\lfloor x\rfloor}\tau(m)\sim \frac{\log_2 x}{1-\log_2\sqrt{3}}$$ (see \cite[page 13]{Lagariasgeneralizations}). We give a partial affirmative answer.
\begin{thm}\label{averagelowerbound}
	It holds that	$\liminf_{x\rightarrow\infty}\frac{1}{x\log_2x}\sum_{m=1}^{\lfloor x\rfloor}\tau(m)\geq\frac{1}{1-\log_2\sqrt{3}}$.
\end{thm}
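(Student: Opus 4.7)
The plan is to use Theorem \ref{main} to transfer the approximation $\ColN^k(m)\approx (\frac{3^{1/2}}{2})^km$ into a pointwise lower bound on $\tau(m)$, valid for a density-one set of $m$, then sum.

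Fix $\epsilon>0$ and let $G_\epsilon$ denote the density-one set of Theorem \ref{main}. I would first establish the pointwise bound
\[
\tau(m)\;\geq\;\frac{(1-\epsilon)\log_2 m}{1-\tfrac{\log_2 3}{2}}\qquad\text{for all }m\in G_\epsilon.
\]
To verify this, I would split into two cases. If $\tau(m)>\frac{\log_2 m}{1-\frac{\log_2 3}{2}}$, the inequality is immediate. If instead $\tau(m)\leq\frac{\log_2 m}{1-\frac{\log_2 3}{2}}$, then $k=\tau(m)$ lies in the range where the conclusion of Theorem \ref{main} applies, so
\[
1\;=\;\ColN^{\tau(m)}(m)\;\geq\;\bigl(\tfrac{3^{1/2}}{2}\bigr)^{\tau(m)}m^{1-\epsilon};
\]
taking $\log_2$ and rearranging gives precisely the claimed bound. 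This also silently handles $\tau(m)=\infty$, and it uses Theorem \ref{main} in exactly the direction it is stated.

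Next I would sum this pointwise bound. Writing $B_\epsilon(x)=\{n\leq x : n\notin G_\epsilon\}$, the natural density statement gives $|B_\epsilon(x)|=o(x)$, and hence
\[
\sum_{n\leq x,\, n\in G_\epsilon}\log_2 n \;=\;\sum_{n\leq x}\log_2 n\;-\;\sum_{n\in B_\epsilon(x)}\log_2 n\;=\;x\log_2 x\,(1+o(1))\;-\;o(x)\cdot\log_2 x,
\]
which is $x\log_2 x\,(1+o(1))$. Combining this with the pointwise estimate, and discarding the (nonnegative) contribution from $n\notin G_\epsilon$, yields
\[
\sum_{n=1}^{\lfloor x\rfloor}\tau(n)\;\geq\;\frac{1-\epsilon}{1-\tfrac{\log_2 3}{2}}\cdot x\log_2 x\,(1+o(1)).
\]

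Finally I would divide by $x\log_2 x$, pass to $\liminf_{x\to\infty}$, and then let $\epsilon\downarrow 0$ to obtain the theorem. There is no serious obstacle here beyond Theorem \ref{main} itself; the only mildly delicate point is checking that $B_\epsilon(x)$ being $o(x)$ makes the omitted $\log$-weighted sum negligible on the scale $x\log_2 x$, which is what the trivial bound $\log_2 n\leq \log_2 x$ provides.
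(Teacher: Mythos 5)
Your proposal is correct, and it takes a genuinely different (and more economical) route than the paper. You extract a pointwise bound $\tau(m)\geq\frac{(1-\epsilon)\log_2 m}{1-\frac{\log_2 3}{2}}$ on the density-one set of Theorem \ref{main} by evaluating the lower approximation at $k=\tau(m)$ when $\tau(m)$ lies in the admissible range (the case $\tau(m)$ beyond the range, including $\tau(m)=\infty$, being trivial), and then you sum, using only the qualitative natural-density statement together with the trivial bound $\log_2 n\leq\log_2 x$ to show the exceptional set of size $o(x)$ contributes $o(x\log_2 x)$. The paper instead routes through its quantitative machinery: it first proves Theorem \ref{lowerboundstar} (via Theorem \ref{reform}), namely that $\{m\mid \tau(m)>\alpha\log_2 m\}$ is \stardense\ for every $\alpha<\frac{1}{1-\frac{\log_23}{2}}$, and then sums over dyadic blocks $\intervalN{2^n}{2^{n+1}}$, using Lemma \ref{partitiondenseN} to convert \stardensity\ into blockwise bounds $1-\frac{C}{2^{nD}}$ and carrying out explicit estimates of $\sum_i i2^i$ and of the error term $\sum_i i2^{(1-D)i}$. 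Your argument shows that for the $\liminf$ statement the weaker density-one input already suffices, because the weights $\log_2 n$ are uniformly $\leq\log_2 x$; the paper's block decomposition buys explicit rates (the error coming from the exceptional set decays like a power of $2^{-n}$ on each block), which is natural given that its density results are proved in the quantitative \stardensity\ form, but it is not needed for the asymptotic inequality itself. The only small points to make explicit in your write-up are the standard estimate $\sum_{n\leq x}\log_2 n=x\log_2 x\,(1+o(1))$ and the sign check $\log_2\frac{3^{1/2}}{2}=\frac{\log_23}{2}-1<0$ when rearranging, both of which are routine.
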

Of course, Theorem \ref{averagelowerbound} is only interesting in case the Collatz-Conjecture is true. But even if the Collatz conjecture was true, it would still not be clear that the Conjecture of Crandall and Shanks would hold. Kontorovich and Lagarias conjectured that $\tau(m)$ is in $O(\log_2m)$ (see \cite[Conjecture 4.1]{kontorovich2009}). We show that this stronger version of the Collatz conjecture implies the Conjecture of Crandall and Shanks.
\begin{thm}
	Suppose there exists  $C>0$ such that $\tau(m)\leq C\log_2m$ for all $m\in\positivenaturals$, then $$\lim_{x\rightarrow\infty}\frac{1}{x\log_2x}\sum_{m=1}^{\lfloor x\rfloor}\tau(m)= \frac{1}{1-\log_2\sqrt{3}}.$$
\end{thm}
We also note that the results for $\ColN$ imply a similar result for the original Collatz map $\Col$.
\begin{thm}\label{thirdmaintheoremCol}
	Suppose that $\epsilon>0$. Then the set \begin{equation*}
		\left\{m\in\positivenaturals\mid\forall 0\leq k\leq \frac{3\log_2m}{2-\log_23}:  \left(\sqrt[3]{\frac{3}{4}}\right)^{k}m^{1-\epsilon}\leq \Col^k(m)\leq \left(\sqrt[3]{\frac{3}{4}}\right)^km^{1+\epsilon}\right\}\end{equation*} is of natural density $1$.
\end{thm}
The following acceleration of the Collatz map has been subject of investigation as well (for example in \cite{Crandall78} and \cite{Tao}). Given any odd number $m$, in this version of the Collatz map one only pays attention to the odd numbers in the orbit $m,\ColN(m),\ColN^2(m),\cdots$. To be precise, let $\oddnaturals$ denote the set of odd natural numbers, and for any $m\in\positivenaturals$, let $\nu_2(m)$ denote the maximal natural number $k$ such that $2^{k}$ divides $m$. Then define
the \textbf{Syracuse map} (first appearing to the author's knowledge in \cite{Crandall78}) by
\begin{equation*}
	\Colfast:\oddnaturals\rightarrow \oddnaturals;\ab m\mapsto\frac{3m+1}{2^{\nu_2(3m+1)}}.
\end{equation*}
Heuristically, the probability that $\nu_2(3m+1)$ equals $k$, is $2^{-k}$ for $k\geq 1$. Thus, $\log_2(\frac{\Colfast(m)}{m})\approx \log_23-k$ with probability $2^{-k}$. Therefore, one expects $\log_2(\frac{\Colfast(m)}{m})$ to be $\sum_{k=1}^{\infty}(\log_23-k)2^{-k}=\log_23-2=\log_2\frac{3}{4}$. Thus, heuristically, $\Colfast^k(m)\approx \left(\frac{3}{4}\right)^km$ for $0\leq k\leq \left(\log_2\frac{4}{3}\right)^{-1}\log_2m$. We show that this heuristic is indeed correct. For this theorem, define a set $A\subseteq \oddnaturals$ to be of natural density $1$ in $\oddnaturals$ if  $\liminf_{n\rightarrow\infty}\frac{\#\left\{m\in A\mid m\leq 2n+1\right\}}{n+1}=1$.
\begin{thm}\label{TheoremColfast}
	Suppose that $\epsilon>0$. Then the set
	\begin{equation*}
		\left\{m\in \oddnaturals\mid\forall 0\leq k\leq\left(\log_2\frac{4}{3}\right)^{-1}\log_2m:  \left(\frac{3}{4}\right)^{k}m^{1-\epsilon}\leq \Colfast^k(m)\leq \left(\frac{3}{4}\right)^km^{1+\epsilon}\right\}\end{equation*} is of natural density $1$ in $\oddnaturals$.
\end{thm}
	\section{An Approximation of $\ColN$}\label{stopping}
	We begin with some basic results.
The following lemma appears in a slightly different formulation in \cite[Proposition 5.1]{lagarias2006benfords}. It follows from an easy induction, which we spell out for the reader's convenience.
	\begin{lem}\label{ColIter}
For $m\in\positivenaturals$ and $k\in\N$ we have $$\ColN^k(m)=\left(\frac{m}{2^k}+\sum_{i=0}^{k-1}\frac{p(m)_i}{3^{\sum_{j=0}^{i}p(m)_j}2^{k-i}}\right)\cdot 3^{\sum_{i=0}^{k-1}p(m)_i}.$$
\end{lem}
\begin{proof}
	By induction. The case $k=0$ is trivial. If $\ColN^k(m)=\left(\frac{m}{2^k}+\sum_{i=0}^{k-1}\frac{p(m)_i}{3^{\sum_{j=0}^{i}p(m)_j}2^{k-i}}\right)\cdot 3^{\sum_{i=0}^{k-1}p(m)_i}$ and $p(m)_k=0$, then \begin{align*}
	&	\ColN^{k+1}(m)=\frac{1}{2}\left(\frac{m}{2^k}+\sum_{i=0}^{k-1}\frac{p(m)_i}{3^{\sum_{j=0}^{i}P(m)_j}2^{k-i}}\right)\cdot 3^{\sum_{i=0}^{k-1}p(m)_i}\\&=\left(\frac{m}{2^{k+1}}+\sum_{i=0}^{k}\frac{p(m)_i}{3^{\sum_{j=0}^{i}p(m)_j}2^{k+1-i}}\right)\cdot 3^{\sum_{i=0}^{k}p(m)_i},\end{align*} and if $p(m)_k=1$, then \begin{align*}
	\ColN^{k+1}(m)&=\frac{1}{2}\left(3\left(\frac{m}{2^k}+\sum_{i=0}^{k-1}\frac{p(m)_i}{3^{\sum_{j=0}^{i}p(m)_j}2^{k-i}}\right)\cdot 3^{\sum_{i=0}^{k-1}p(m)_i}+1\right)\\
	&=\left(\frac{m}{2^{k+1}}+\sum_{i=0}^{k}\frac{p(m)_i}{3^{\sum_{j=0}^{i}p(m)_j}2^{k+1-i}}\right)\cdot 3^{\sum_{i=0}^{k}p(m)_i}.\end{align*}
\end{proof}
	\begin{defn}
	For $m\in\positivenaturals$ and $k\in\N$, we define $r_k(m)=\sum_{i=0}^{k-1}\frac{p(m)_i}{3^{\sum_{j=0}^{i}p(m)_j}2^{k-i}}$. Thus, we have $$\ColN^k(m)=\left(\frac{m}{2^k}+r_k(m)\right)\cdot 3^{\sum_{i=0}^{k-1}p(m)_i}.$$
	
\end{defn}
\begin{lem}\label{splitrkm}
	Suppose that $m\in\positivenaturals$ and $k,k_0\in\N$. Then
	\begin{enumerate}
		\item $r_{k+k_0}(m)
		=2^{-k}r_{k_0}(m)+\frac{1}{3^{\sum_{j=0}^{k_0-1}p(m)_j}}r_{k}(\ColN^{k_0}(m))$,
		\item  $0\leq r_k(m)< 1$.
	\end{enumerate} 
\end{lem}
\begin{proof}
To see 1., note that
 \begin{align*}
		&	r_{k+k_0}(m)=\sum_{i=0}^{k+k_0-1}\frac{p(m)_i}{3^{\sum_{j=0}^{i}p(m)_j}2^{k+k_0-i}}
		=
		\sum_{i=0}^{k_0-1}\frac{p(m)_i}{3^{\sum_{j=0}^{i}p(m)_j}2^{k+k_0-i}}+\sum_{i=k_0}^{k+k_0-1}\frac{p(m)_i}{3^{\sum_{j=0}^{i}p(m)_j}2^{k+k_0-i}}
		\\&=
		2^{-k}r_{k_0}(m)+\frac{1}{3^{\sum_{j=0}^{k_0-1}p(m)_j}}\sum_{i=k_0}^{k+k_0-1}\frac{p(m)_i}{3^{\sum_{j=k_0}^{i}p(m)_j}2^{k+k_0-i}}
		\\&=	2^{-k}r_{k_0}(m)+\frac{1}{3^{\sum_{j=0}^{k_0-1}p(m)_j}}\sum_{i=0}^{k-1}\frac{p(m)_{i+k_0}}{3^{\sum_{j=0}^{i}p(m)_{j+k_0}}2^{k-i}}\\&=2^{-k}r_{k_0}(m)+\frac{1}{3^{\sum_{j=0}^{k_0-1}p(m)_j}}\sum_{i=0}^{k-1}\frac{p(\ColN^{k_0}(m))_{i}}{3^{\sum_{j=0}^{i}p(\ColN^{k_0}(m))_{j}}2^{k-i}}=2^{-k}r_{k_0}(m)+\frac{1}{3^{\sum_{j=0}^{k_0-1}p(m)_j}}r_{k}(\ColN^{k_0}(m)),\end{align*}
	where we used the fact that $p(m)_{k_0+l}=p(\ColN^{k_0}(m))_l$ for all $l\in\N$.
	
	To see 2., note that
	 $0\leq p(m)_i\leq 1$, $3^{\sum_{j=0}^{i}p(m)_j}\geq 1$, and \ab $\sum_{i=0}^{k-1}\frac{1}{2^{k-i}}<1$. Thus,
	$$0\leq r_k(m)=\sum_{i=0}^{k-1}\frac{p(m)_i}{3^{\sum_{j=0}^{i}p(m)_j}2^{k-i}}\leq\sum_{i=0}^{k-1}\frac{1}{2^{k-i}}< 1.$$
\end{proof}

The following proposition appeared independently in slightly different but equivalent formulations in \cite[Theorem 1.2]{Terras1976} and \cite[Theorem 1]{EVERETT197742}. In the following, $\left\{0,1\right\}^{\intervalN{0}{N}}$ denotes all sequences $(a_i)_{0\leq i<N}$ with values in $\left\{0,1\right\}$.
	\begin{prop}\label{uniformC}
		Suppose that $M\in\positivenaturals$ and $N\in\N$. Then the push-forward measure of the uniform measure on  $\intervalN{M}{M+2^N}$ under the map $m\mapsto (p(m)_i)_{0\leq i<N}$ is the uniform measure on $\left\{0,1\right\}^{\intervalN{0}{N}}$.
	\end{prop}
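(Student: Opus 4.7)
The plan is to observe that this proposition is essentially a direct reinterpretation of the counting statement just proved, so the proof amounts to unpacking the definition of push-forward on singletons. First I would note that the finite set $\intervalN{M}{(M+2^N)}$ has exactly $2^N$ elements and that the codomain $\{0,1\}^{\intervalN{0}{N}}$ also has exactly $2^N$ elements, so the uniform measures in question assign mass $2^{-N}$ to each singleton.

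Next I would write $\phi$ for the parity map $m\mapsto (p(m)_n)_{0\leq n<N}$ and recall that the push-forward $\phi_*\mu_{\intervalN{M}{(M+2^N)}}$ is determined on a finite set by its values on singletons. Fixing an arbitrary $(a_n)_{n<N}\in\{0,1\}^{\intervalN{0}{N}}$, the definition gives
\[
\phi_*\mu_{\intervalN{M}{(M+2^N)}}(\{(a_n)_{n<N}\})
= \mu_{\intervalN{M}{(M+2^N)}}\bigl(\phi^{-1}(\{(a_n)_{n<N}\})\bigr)
= \frac{\#\{m\in\intervalN{M}{(M+2^N)}\mid (p(m)_n)_{n<N}=(a_n)_{n<N}\}}{2^N}.
\]
By the preceding proposition the numerator equals $1$, so the value is $2^{-N}$, which is exactly the uniform mass of a singleton in $\{0,1\}^{\intervalN{0}{N}}$. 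Since both measures agree on all singletons of a finite set, they coincide.

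There is no real obstacle here: the entire content is contained in the bijectivity statement established in the previous proposition, and the only thing to verify is that ``one preimage for each sequence'' translates into ``push-forward is uniform,'' which is mere bookkeeping. I would keep the argument short and explicit rather than invoking any general measure-theoretic machinery.
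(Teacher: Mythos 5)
Your argument is correct and is exactly what the paper intends: the proposition is stated there as a direct reformulation of the preceding counting result, and your singleton-by-singleton computation of the push-forward is the bookkeeping the paper leaves implicit. No differences of substance.
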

For a proof, see \cite[Theorem 1.2]{Terras1976} or \cite[Theorem 1]{EVERETT197742}.
\begin{lem}\label{Hoeffding}
	Suppose that $a,b\in\positivenaturals$, $a<b$, $\epsilon>0$, and $0\leq N\leq \lceil\log_2(b-a)\rceil$. Then
	$$\mu_{\intervalN{a}{b}}\left(\left\{m\in\intervalN{a}{b}\mid \left|\sum_{k=0}^{N-1}p(m)_k-\frac{N}{2}\right|\geq\epsilon N\right\}\right)\leq 4e^{-2\epsilon^2N}.$$
\end{lem}

	\begin{proof}
We will use Proposition \ref{uniformC} and Hoeffding's inequality (see \cite{Hoeffding}), which states that if $\nu_n$ is the uniform measure on $\left\{0,1\right\}^{\intervalN{0}{n}}$, then \begin{equation}\label{Hoeffding1}
\nu_{n}\left(\left\{x\in\left\{0,1\right\}^{\intervalN{0}{n}}\mid\left|\sum_{k=0}^{n-1}x_k-\frac{n}{2}\right|\geq\epsilon n\right\}\right)\leq 2e^{-2\epsilon^2n}.\end{equation}
	Set $M=\lceil\log_2(b-a)\rceil$. By Proposition \ref{uniformC},  the push-forward measure of $\mu_{\intervalN{a}{a+2^M}}$ under the map $$\intervalN{a}{a+2^M}\rightarrow\left\{0,1\right\}^{\intervalN{0}{M}};\ab m\mapsto (p(m)_n)_{0\leq n<M}$$ is the uniform measure on $\left\{0,1\right\}^{\intervalN{0}{M}}$, and therefore, also the push-forward of the measure $\mu_{\intervalN{a}{a+2^M}}$ under the map $$\intervalN{a}{a+2^M}\rightarrow\left\{0,1\right\}^{\intervalN{0}{N}};\ab m\mapsto (p(m)_n)_{0\leq n<N}$$  is the uniform measure on $\left\{0,1\right\}^{\intervalN{0}{N}}$. Hence, by Hoeffding's inequality (\ref{Hoeffding1})	$$\mu_{\intervalN{a}{a+2^M}}\left(\left\{m\in\intervalN{a}{a+2^M}\mid \left|\sum_{k=0}^{N-1}p(m)_k-\frac{N}{2}\right|\geq \epsilon N\right\}\right)\leq 2e^{-2\epsilon^2N}.$$ Or equivalently, $$\#\left\{m\in\intervalN{a}{a+2^M}\mid \left|\sum_{k=0}^{N-1}p(m)_k-\frac{N}{2}\right|\geq \epsilon N\right\}\leq 2^{M+1}e^{-2\epsilon^2N},$$ therefore, also $$\#\left\{m\in\intervalN{a}{b}\mid\left| \sum_{k=0}^{N-1}p(m)_k-\frac{N}{2}\right|\geq\epsilon N\right\}\leq 2^{M+1}e^{-2\epsilon^2N}$$ as $\intervalN{a}{b}\subseteq\intervalN{a}{a+2^M}$. Thus, $$\mu_{\intervalN{a}{b}}\left(\left\{m\in\intervalN{a}{b}\mid\left| \sum_{k=0}^{N-1}p(m)_k-\frac{N}{2}\right|\geq\epsilon N\right\}\right)\leq\frac{2^{M+1}}{b-a}e^{-2\epsilon^2N}\leq 4e^{-2\epsilon^2N}$$ as $b-a>2^{M-1}$.
\end{proof}

	In the following, we will frequently deal with subsets $S\subseteq \positivenaturals$ that fulfill a specific notion of density, which we therefore give a name.
	\begin{defn}\label{DefStardense}
		 Suppose that $C>0$, $0<D\leq 1$, and $S\subseteq \positivenaturals$. Then $S$ has \textbf{\density{C,D}} if $$\mu_{\intervalA{1}{N}}(S\cap\intervalA{1}{N})\geq 1-\frac{C}{N^D}$$ for every $N\in\positivenaturals$.  We say that $S\subseteq\positivenaturals$ has \textbf{\fixdensity{D}} if there exists $C>0$ such that $S$ has \density{C,D}, and we say that $S\subseteq\positivenaturals$ is \textbf{\stardense} if $S$ has \fixdensity{D} for some $0<D\leq 1$.
	\end{defn}
We gather some properties of this notion in the following lemma.
\begin{lem}
	Suppose that $S_i\subseteq\positivenaturals$ have \density{C_i,D_i} for some $C_i>0$ and $0<D_i\leq 1$ for $i\in\left\{0,1\right\}$. 
	\begin{enumerate}
		\item The set $S_0\cap S_1$ has \density{C_0+C_1,\min\left(D_0,D_1\right)}.
		\item $S_0$ has \density{C,D} for every $C\geq C_0$ and $0<D\leq D_0$.
		\item Any subset of \ab$\positivenaturals$ containing $S_0$ has \density{C_0,D_0}.
			\item If $S,T\subseteq \positivenaturals$ are \stardense, then $S\cap T$ is \stardense.
	\item For $M\in\positivenaturals$ the set $\positivenaturals\setminus\intervalN{1}{M}$ is \stardense.
	\end{enumerate}
\begin{proof}
	To see $1.$, note that $\frac{C_0}{N^{D_0}}+\frac{C_1}{N^{D_1}}\leq \frac{C_0+C_1}{N^{\min\left(D_0,D_1\right)}}$.
	To see $2.$, note that $\frac{C_0}{N^{D_0}}\leq \frac{C}{N^D}$ in case $C\geq C_0$ and $0<D\leq D_0$.
	$3.$ follows from the fact that $\mu_{\intervalA{1}{N}}(A\cap\intervalA{1}{N})\leq \mu_{\intervalA{1}{N}}(B\cap \intervalA{1}{N})$ whenever $A\subseteq B\subseteq \positivenaturals$.
	$4.$ follows from $1.$ Finally, $5.$ is true as $\mu_{\intervalA{1}{N}}(\intervalA{1}{N}\setminus\intervalN{1}{M})\geq 1-\frac{M-1}{N}$ for all $N\in\positivenaturals$.
\end{proof}
	
\end{lem}
	\begin{lem}\label{partitiondenseN}
	Let $S\subseteq \positivenaturals$ and $a>1$. If there exists $C>0$ and $0<D<1$ such that $$\frac{\#S\cap \intervalN{a^n}{a^{n+1}}}{a^{n+1}-a^n}\geq 1-\frac{C}{a^{Dn}}$$ for all $n\in\N$, then $S$ has \fixdensity{D}, in particular, $S$ is \stardense.
	Furthermore, the converse holds as well, i.e., if $S$ has \fixdensity{D}, then there exists $C^\prime>0$ such that  $$\frac{\#S\cap \intervalN{a^n}{a^{n+1}}}{a^{n+1}-a^n}\geq 1-\frac{C^\prime}{a^{Dn}}$$ for all $n\in\N$.
\end{lem}
\begin{proof}
	
	Suppose that there exist $C>0$ and $0<D<1$ such that for all $n\in\N$ it holds that $$\frac{\#S\cap \intervalN{a^n}{a^{n+1}}}{a^{n+1}-a^n}\geq 1-\frac{C}{a^{Dn}}.$$ Let $N>0$. Look at the set $U=\positivenaturals\setminus S$. Then by assumption $$\#U\cap \intervalN{a^n}{a^{n+1}} \leq (a^{n+1}-a^n)\frac{C}{a^{Dn}}.$$ Let $n\in\N$ such that $a^n\leq N<a^{n+1}$. Then  $$\#U\cap \intervalA{1}{N} \leq\sum_{i=0}^{n}(a^{i+1}-a^i)\frac{C}{a^{Di}}=(a-1)C\sum_{i=0}^{n}a^{(1-D)i}=\frac{\left(a-1\right)C\left(a^{(1-D)(n+1)}-1\right)}{a^{1-D}-1}.$$
	Since $a^{n+1}\leq aN$, we obtain $$\#U\cap \intervalA{1}{N} \leq \frac{(a-1)Ca^{(1-D)(n+1)}}{a^{1-D}-1}\leq \frac{(a-1)Ca^{1-D}N^{1-D}}{a^{1-D}-1}.$$  Thus,  $$\frac{\#S\cap \intervalA{1}{N}}{N} \geq 1-\frac{(a-1)Ca^{1-D}}{\left(a^{1-D}-1\right)N^{D}}.$$ Thus, $S$ has \fixdensity{D}.
	
	Suppose now that $S$ has \density{C,D} for some $C>0$ and $0<D<1$. Look at the set $U=\positivenaturals\setminus S$. Then by hypothesis $$\frac{\#U\cap \intervalA{1}{a^{n+1}}}{a^{n+1}}\leq \frac{C}{a^{D(n+1)}}.$$ Thus, $$\frac{\#U\cap \intervalN{a^n}{a^{n+1}}}{a^{n+1}-a^n}\leq \frac{a^{n+1}}{a^{n+1}-a^n}\frac{C}{a^Da^{Dn}}= \frac{a^{1-D}}{a-1}\frac{C}{a^{Dn}},$$ therefore, $$\frac{\#S\cap\intervalN {a^n}{a^{n+1}}}{a^{n+1}-a^n}\geq 1- \frac{a^{1-D}}{a-1}\frac{C}{a^{Dn}}.$$ Hence, the claim follows with $C^\prime=\frac{a^{1-D}C}{a-1}$.
\end{proof}
We use Lemma \ref{Hoeffding} to prove the following lemma, which we will use in the proofs of Lemma \ref{beginning} and Lemma \ref{imagesareclose}.
\begin{lem}\label{paritystar}
	For every $\epsilon>0$ and $0<\alpha\leq 1$ the set $$\left\{m\in\positivenaturals\mid\forall \lfloor\alpha\log_2m\rfloor\leq k\leq\lfloor\log_2m\rfloor:-\epsilon k<  \sum_{i=0}^{k-1}p(m)_i-\frac{k}{2}< \epsilon k\right\}$$ is \stardense.
\end{lem}
\begin{proof}
	 For $0\leq k\leq n$ consider the set $$S^n_{k}=\left\{m\in\intervalN{2^n}{2^{n+1}}\mid\left| \sum_{i=0}^{k-1}p(m)_i-\frac{k}{2}\right|\geq \epsilon k\right\}.$$ By Lemma \ref{Hoeffding}, we have $\mu_{\intervalN{2^n}{2^{n+1}}}(S^n_{k})\leq 4e^{-2\epsilon^2k}$. Thus, for the set $$A^n=\bigcup^n_{k=\lfloor\alpha n\rfloor}S^n_{k}$$ we obtain $$\mu_{\intervalN{2^n}{2^{n+1}}}(A^n)\leq \sum^n_{k=\lfloor\alpha n\rfloor}4e^{-2\epsilon^2k}=4e^{-2\epsilon^2\lfloor\alpha n\rfloor}\frac{1-e^{-2\epsilon^2(n-\lfloor\alpha n\rfloor+1)}}{1-e^{-2\epsilon^2}}<4e^{2\epsilon^2}\frac{e^{-2\epsilon^2\alpha n}}{1-e^{-2\epsilon^2}}.$$
	 Hence, since $\lfloor\log_2m\rfloor=n$ for $m\in\intervalN{2^n}{2^{n+1}}$ and $\lfloor\alpha \log_2m\rfloor\geq \lfloor\alpha \lfloor \log_2m\rfloor\rfloor $, we obtain  \begin{align*}
	 	&\mu_{\intervalN{2^n}{2^{n+1}}}\left(\left\{m\in\intervalN{2^n}{2^{n+1}}\mid\forall \lfloor\alpha\log_2m\rfloor\leq k\leq\lfloor\log_2m\rfloor:-\epsilon k<  \sum_{i=0}^{k-1}p(m)_i-\frac{k}{2}< \epsilon k\right\}\right)\\
	 	&\geq 1- \frac{4e^{2\epsilon^2}}{1-e^{-2\epsilon^2}}e^{-2\epsilon^2\alpha n}. \end{align*} Since $$e^{-2\epsilon^2\alpha n}=\frac{1}{2^{(2\epsilon^2\alpha\log_2e) n}},$$ the claim follows from Lemma \ref{partitiondenseN} with $a=2$.
\end{proof}
	Now, we give an estimate of $r_k(m)$ on a \stardense\ab set.
	\begin{lem}\label{rest}
	Suppose that $\epsilon>0$. Then the set $$\left\{m\in\positivenaturals\mid\forall 0\leq k\leq \log_2m: r_k(m) 3^{\frac{k}{2}}m^{-\epsilon}<1\right\}$$ is \stardense.
\end{lem}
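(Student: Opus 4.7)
The plan is to reduce the desired bound on $r_k(m)\cdot 3^{k/2}$ to a uniform deviation estimate on the partial parity sums $s_i(m):=\sum_{j=0}^{i-1}p(m)_j$, and then derive that estimate from Lemma \ref{Hoeffding} via a dyadic union bound. Writing $v_i(m):=i/2-s_i(m)$ for the deviation from balanced parity, only indices with $p(m)_i=1$ contribute to $r_k(m)$, and for such $i$ the denominator carries $3^{s_i(m)+1}$. Using the identity $3^{k/2-s_i(m)-1}/2^{k-i}=\tfrac{1}{3}\cdot 3^{v_i(m)}(3^{1/2}/2)^{k-i}$ we obtain
\[
r_k(m)\cdot 3^{k/2}\;=\;\frac{1}{3}\sum_{\substack{0\le i<k\\ p(m)_i=1}}3^{v_i(m)}\Bigl(\frac{3^{1/2}}{2}\Bigr)^{k-i}\;\le\;C_1\max_{0\le i<k}3^{v_i(m)},
\]
where $C_1:=\tfrac{1}{3}\sum_{j\ge 1}(3^{1/2}/2)^j<\infty$ since $3^{1/2}/2<1$.

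Fix $\delta=\epsilon/(2\log_2 3)$ and define
\[
T\;=\;\{m\in\N+1\mid\forall\,0\le i\le\lfloor\log_2 m\rfloor:\;v_i(m)\le\delta\lfloor\log_2 m\rfloor\}.
\]
For $m\in T$ and $0\le k\le\log_2 m$, setting $n=\lfloor\log_2 m\rfloor$, the display above gives $r_k(m)\cdot 3^{k/2}\le C_1\cdot 3^{\delta n}\le C_1\cdot m^{\delta\log_2 3}=C_1\cdot m^{\epsilon/2}$, which is $<m^\epsilon$ for every $m>C_1^{2/\epsilon}$. Since discarding a finite set preserves \stardensity, it suffices to show that $T$ is \stardense.

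For this I apply Lemma \ref{Hoeffding} dyadically. Fix $n\ge 1$ and take $a=2^n$, $b=2^{n+1}$ (so $\lceil\log_2(b-a)\rceil=n$). For each $1\le i\le n$, using the lemma with $N=i$ and deviation parameter $\delta n/i$ gives
\[
\mu_{\intervalN{2^n}{2^{n+1}}}\bigl(\{m:s_i(m)\le i/2-\delta n\}\bigr)\;\le\;2\exp(-2\delta^2 n^2/i)\;\le\;2\exp(-2\delta^2 n),
\]
while for $i<2\delta n$ the event is empty since $s_i\ge 0$. A union bound over $i\le n$ yields
\[
\mu_{\intervalN{2^n}{2^{n+1}}}(T^c\cap\intervalN{2^n}{2^{n+1}})\;\le\;2n\exp(-2\delta^2 n),
\]
which decays faster than any fixed negative power of $2^n$. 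Lemma \ref{partitiondenseN} with $a_n=2^n$ now yields the \stardensity\ of $T$, exactly as in the proof of Lemma \ref{paritystar}.

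The only real step of substance, and the novelty over Lemma \ref{paritystar} (which controls only the single terminal sum $s_{\lfloor\log_2 m\rfloor}(m)$), is the uniform-in-$i$ control on the partial parity sums. This is handled comfortably by the union bound because the Hoeffding exponent $-2\delta^2 n^2/i$ dominates the $n$-fold loss coming from the union, and the small-index regime $i<2\delta n$ is automatic since the sums $s_i$ are nonnegative.
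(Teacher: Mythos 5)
Your argument is correct, and it reaches the statement by a somewhat different decomposition than the paper's, though both rest on the same two pillars: Lemma \ref{Hoeffding} applied on dyadic blocks $\intervalN{2^n}{2^{n+1}}$ with a union bound over the inner index, followed by Lemma \ref{partitiondenseN}. The paper fixes an auxiliary $0<\epsilon'<\epsilon$, disposes of the range $k\leq k^n_0\approx\frac{2\epsilon'}{2+\log_23}n$ and of the initial segment $i<k^n_0$ of the sum defining $r_k(m)$ by crude bounds played off against $m^{-\epsilon'}$, and controls the parity sums only for indices $k\geq k^n_0$ with a fixed \emph{relative} deviation $\delta k$; the leftover constant is then absorbed by the factor $m^{\epsilon'-\epsilon}$. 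You instead use the exact identity $r_k(m)3^{k/2}=\tfrac13\sum_{i<k,\,p(m)_i=1}3^{v_i(m)}(3^{1/2}/2)^{k-i}$ (with your $v_i(m)=i/2-\sum_{j<i}p(m)_j$), whose geometric factor is summable, so everything reduces to the single uniform bound $\max_{i\leq\lfloor\log_2m\rfloor}v_i(m)\leq\delta\lfloor\log_2m\rfloor$; that you obtain by applying Hoeffding with the index-dependent deviation $\delta n/i$, which keeps the exponent $2\delta^2n^2/i\geq2\delta^2n$ uniform in $i$, the small-$i$ range being vacuous. This eliminates the paper's case split in $k$ and the head/tail split in $i$, at the modest price of the varying deviation parameter, and your uniform absolute-deviation control of the partial parity sums is a statement of some independent interest. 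Two small repairs of wording rather than substance: the bound $2ne^{-2\delta^2n}$ does \emph{not} decay faster than every fixed negative power of $2^n$ — it only beats $2^{-Dn}$ for $D<2\delta^2\log_2e$ — but one such power with $0<D<1$ is all Lemma \ref{partitiondenseN} requires, so nothing breaks; and, as in the paper's own block arguments, your per-block estimate need only hold for all sufficiently large $n$, after which the constant $C$ is enlarged to cover the finitely many remaining blocks (likewise for the finitely many $m\leq C_1^{2/\epsilon}$ discarded at the start, using the closure properties of \stardensity\ under finite modifications and supersets).
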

\begin{proof}
	Choose $0<\epsilon^\prime<\epsilon$.
	By 2. of Lemma \ref{splitrkm}, we get $$r_k(m)3^{\frac{k}{2}}m^{-\epsilon^\prime}< 3^{\frac{k}{2}}m^{-\epsilon^\prime}.$$
	Now, $3^{\frac{k}{2}}m^{-\epsilon^\prime}\leq 1$ holds as long as $3^{\frac{k}{2}}\leq m^{\epsilon^\prime}$, which is true for $k\log_2\sqrt{3}\leq \epsilon^\prime\log_2m$, or $k\leq \frac{\epsilon^\prime\log_2m}{\log_2\sqrt{3}}$. Set $\alpha=\frac{\epsilon^\prime}{\log_2\sqrt{3}}$. We have shown that (since $\epsilon^\prime<\epsilon$) 
	$$\left\{m\in\positivenaturals\mid\forall 0\leq k\leq \alpha\log_2m: r_k(m) 3^{\frac{k}{2}}m^{-\epsilon}<1\right\}=\positivenaturals.$$ 
		It remains to show that $$\left\{m\in\positivenaturals\mid\forall \alpha\log_2m\leq k\leq \log_2m: r_k(m) 3^{\frac{k}{2}}m^{-\epsilon}<1\right\}$$ is \stardense. 
	Choose $\delta>0$ with $\delta\log_23<\epsilon^\prime$ and set $$S=\left\{m\in\positivenaturals\mid\forall \lfloor\alpha\log_2m\rfloor\leq k\leq\lfloor\log_2m\rfloor:\left(\frac{1}{2}-\delta\right)k<  \sum_{i=0}^{k-1}p(m)_i\right\}.$$
	Note that $S$ is \stardense\ab by Lemma \ref{paritystar}.
 Suppose that $m\in S$ and set $k_0=\lfloor \alpha\log_2m\rfloor$. If $k_0\leq k\leq\log_2m$, then 
 \begin{equation}\label{rkmfirst}
 	\begin{split}
 	&r_k(m)3^{\frac{k}{2}}m^{-\epsilon^\prime}=\left(\sum_{i=0}^{k-1}\frac{p(m)_i}{3^{\sum_{j=0}^{i}p(m)_j}2^{k-i}}\right)3^{\frac{k}{2}}m^{-\epsilon^\prime}
	\\&\leq
	\left(\sum_{i=0}^{k_0-1}\frac{p(m)_i}{3^{\sum_{j=0}^{i}p(m)_j}2^{k_0-i}}\right)2^{k_0-k}3^{\frac{k}{2}}m^{-\epsilon^\prime}+\sum_{i=k_0}^{k-1}\frac{1}{3^{\frac{i+1}{2}(1-2\delta)}2^{k-i}}3^{\frac{k}{2}}m^{-\epsilon^\prime}\\
	&\leq r_{k_0}(m)3^{\frac{k_0}{2}}m^{-\epsilon^\prime}\frac{3^{\frac{k-k_0}{2}}}{2^{k-k_0}}+\sum_{i=k_0}^{k-1}\frac{2^i}{3^{\frac{i}{2}(1-2\delta)}}3^{\frac{k}{2}}2^{-k}m^{-\epsilon^\prime}
	\leq 1 + \frac{\left(\frac{2}{3^{\frac{1}{2}(1-2\delta)}}\right)^{k}}{\frac{2}{3^{\frac{1}{2}(1-2\delta)}}-1}3^{\frac{k}{2}}2^{-k}m^{-\epsilon^\prime}\\
	&= 1 + 3^{\delta k}\frac{1}{\frac{2}{3^{\frac{1}{2}(1-2\delta)}}-1}m^{-\epsilon^\prime}.
\end{split}\end{equation}  For the first inequality, we used that $p(m)_i\leq 1$ and $\left(\frac{1}{2}-\delta\right)(i+1)<  \sum_{j=0}^{i}p_j(m)$ for $k_0\leq i\leq k-1$, for the second inequality, we used $3^{\frac{i+1}{2}(1-2\delta)}>3^{\frac{i}{2}(1-2\delta)}$, and for the last inequality, we used $$r_{k_0}(m)3^{\frac{k_0}{2}}m^{-\epsilon^\prime}\leq 1,\ab\ab \frac{3^{\frac{k-k_0}{2}}}{2^{k-k_0}}\leq 1,$$ and the fact that $\sum_{i=k_0}^{k-1}q^i<\frac{q^k}{q-1}$ for $q=\frac{2}{3^{\frac{1}{2}(1-2\delta)}}>1$.

Now, since $\delta\log_23<\epsilon^\prime$  and $3^{\delta k}\leq m^{\delta\log_23}$ (since $k\leq\log_2m$), we conclude that \begin{equation}\label{rkmsecond}
1 + 3^{\delta k}\frac{1}{\frac{2}{3^{\frac{1}{2}(1-2\delta)}}-1}m^{-\epsilon^\prime}<2\end{equation} if $m$ is sufficiently large. Now, since $\epsilon^\prime<\epsilon$, combining (\ref{rkmfirst}) and (\ref{rkmsecond}), we obtain that $$r_k(m)3^{\frac{k}{2}}m^{-\epsilon}
	\leq 2m^{\epsilon^\prime-\epsilon}
	< 1$$ in case $m$ is sufficiently large. Thus, a co-finite\footnote{If $T$ is a subset of $S$, then $T$ is \textbf{co-finite in }$S$ if $S\setminus T$ is finite. In particular, if $S\subseteq\positivenaturals$ is \stardense, then $T$ is \stardense\ab as well.} subset of $S$ is contained in
	$$\left\{m\in\positivenaturals\mid\forall 0\leq k\leq \log_2m: r_k(m) 3^{\frac{k}{2}}m^{-\epsilon}<1\right\}.$$
	Hence, the proof is complete.
\end{proof}
	The next lemma gives a very coarse estimate of $\ColN^k(m)$, which we will sometimes use for small values of $k$.
	\begin{lem}\label{bruteforce}
		Suppose that $q\in\positivenaturals$, $k,p\in\N$, $A,B\in(0,\infty)$, and $A\leq \ColN^k(q)\leq B$. Then $\frac{A}{2^p}\leq\ColN^{k+p}(q)\leq 2^pB$. Furthermore, if $p\leq k$, then $\frac{A}{2^p}\leq\ColN^{k-p}(q)\leq 2^pB$. \end{lem}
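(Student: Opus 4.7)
The plan is to reduce the statement to the elementary one-step bounds
\[
  \tfrac{n}{2} \leq \ColN(n) \leq 2n \qquad (n\in\N+1),
\]
and then compose them. The left inequality is immediate: if $n$ is even then $\ColN(n)=n/2$, and if $n$ is odd then $\ColN(n)=(3n+1)/2>n/2$. The right inequality is also trivial in the even case, and in the odd case follows from $(3n+1)/2\le(3n+n)/2=2n$, using $n\ge 1$. Since every iterate of $\ColN$ starting from a positive integer is again a positive integer, these bounds apply at every step.

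Next I would iterate: a straightforward induction on $p$ yields
\[
  \tfrac{n}{2^p}\leq \ColN^p(n)\leq 2^p n
\]
for all $n\in\N+1$ and $p\in\N$. The induction step applies the one-step bound to $\ColN^p(n)\in\N+1$ and multiplies by the corresponding factors.

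For the forward direction of the lemma, I would apply this inequality to $n=\ColN^k(q)$, so that
\[
  \frac{A}{2^p}\le \frac{\ColN^k(q)}{2^p}\le \ColN^{k+p}(q)\le 2^p\ColN^k(q)\le 2^pB.
\]
For the backward direction, under the assumption $p\leq k$ I would set $m=\ColN^{k-p}(q)\in\N+1$, so that $\ColN^p(m)=\ColN^k(q)\in[A,B]$. From $m/2^p\le\ColN^p(m)$ one gets $m\le 2^p\ColN^p(m)\le 2^pB$, and from $\ColN^p(m)\le 2^pm$ one gets $m\ge\ColN^p(m)/2^p\ge A/2^p$, which yields the required bounds on $\ColN^{k-p}(q)=m$.

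There is no real obstacle here: the whole argument is a two-line induction plus a composition, and the only mild subtlety is that the one-step upper bound uses $n\ge 1$, which is why the statement is restricted to $q\in\N+1$ (and why it is important that all iterates remain in $\N+1$).
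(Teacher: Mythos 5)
Your proof is correct and follows essentially the same route as the paper: establish the elementary bounds $\frac{Q}{2^p}\leq \ColN^p(Q)\leq 2^pQ$ (from the one-step inequality $\frac{3Q+1}{2}\leq 2Q$ for $Q\geq 1$), apply them to $\ColN^k(q)$ for the forward claim, and apply them to $\ColN^{k-p}(q)$ and rearrange for the backward claim. No gaps.
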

	\begin{proof}
		Note that for every $Q\in\positivenaturals$, we have $\frac{Q}{2^p}\leq\ColN^p(Q)\leq2^PQ$ as  $\frac{3Q+1}{2}\leq 2Q$. But then $\frac{A}{2^p}\leq\frac{\ColN^k(q)}{2^p}\leq \ColN^{k+p}(q)\leq 2^p\ColN^k(q)\leq 2^pB$.
		 To see the second claim, observe that $A\leq\ColN^{k-p+p}(q)\leq 2^P\ColN^{k-p}(q)$, thus $\frac{A}{2^p}\leq \ColN^{k-p}(q)$, and $B\geq \ColN^{k-p+p}(q)\geq \frac{\ColN^{k-p}(q)}{2^p}$, thus $2^pB\geq\ColN^{k-p}(q)$.
	\end{proof}

\begin{lem}\label{beginning}
	Suppose that $\epsilon>0$. Then the set $$\left\{m\in\positivenaturals\mid\forall 0\leq k\leq\log_2m:  \left(\cons\right)^km^{1-\epsilon}\leq \ColN^k(m)\leq \left(\cons\right)^km^{1+\epsilon}\right\}$$ is \stardense.
\end{lem}
\begin{proof}

	First, note that by Lemma \ref{bruteforce}, we have	$$\ColN^k(m)\leq 2^km$$ for $m\in\positivenaturals.$ Now, $$2^km\leq \left(\cons\right)^km^{1+\epsilon}$$ is true as long as $4^k3^{-\frac{k}{2}}\leq m^{\epsilon}$ or $k\leq \frac{\epsilon}{2-\log_2\sqrt{3}}\log_2m$.\\
	Also, again  by Lemma \ref{bruteforce}, we have that $$\ColN^k(m)\geq \frac{1}{2^k}m,$$ and $$\frac{1}{2^k}m\geq \left(\cons\right)^km^{1-\epsilon}$$ is true as long as $3^{\frac{k}{2}}\leq m^{\epsilon}$ or $k\leq\frac{\epsilon}{\log_2\sqrt{3}}\log_2m$. Now, $$\min\left(\frac{\epsilon}{2-\log_2\sqrt{3}},\frac{\epsilon}{\log_2\sqrt{3}}\right)=\frac{\epsilon}{2-\log_2\sqrt{3}}.$$
	Set $\alpha=\frac{\epsilon}{2-\log_2\sqrt{3}}$. We have just shown that
	$$\left\{m\in\positivenaturals\mid\forall 0\leq k\leq\alpha\log_2m:  \left(\cons\right)^km^{1-\epsilon}\leq \ColN^k(m)\leq \left(\cons\right)^km^{1+\epsilon}\right\}=\positivenaturals.$$ Thus, it suffices to show that the set 
	$$\left\{m\in\positivenaturals\mid\forall \alpha\log_2m\leq k\leq\log_2m:  \left(\cons\right)^km^{1-\epsilon}\leq \ColN^k(m)\leq \left(\cons\right)^km^{1+\epsilon}\right\}$$ is \stardense.
 Choose $\eta>0$ such that $(\log_23+1)\eta<\epsilon$.
 By Lemma \ref{paritystar}, we know that the set 
  $$S=\left\{m\in\positivenaturals\mid\forall \alpha\log_2m\leq k\leq\log_2m:\left(\frac{1}{2}-\eta\right)k<  \sum_{i=0}^{k-1}p(m)_i< \left(\frac{1}{2}+\eta\right)k\right\}$$ is \stardense. By Lemma \ref{rest}, the set $$R=\left\{m\in\positivenaturals\mid\forall 0\leq k\leq \log_2m: r_k(m) 3^{\frac{k}{2}}m^{-\eta}<1\right\}$$ is \stardense\ab as well.
	Suppose that $m\in S\cap R$ and $\alpha\log_2m\leq k\leq\log_2m$.  Then, using Lemma \ref{ColIter}, we obtain \begin{equation}\label{beginningfirst}\begin{split}
		&\ColN^k(m)=\left(\frac{m}{2^k}+r_k(m)\right) 3^{\sum_{i=0}^{k-1}p(m)_i}\leq \left(\frac{m}{2^k}+r_k(m)\right) 3^{k\left(\frac{1}{2}+\eta\right)}\\
		&\leq \frac{m3^{k\left(\frac{1}{2}+\eta\right)}}{2^k}+m^{\eta}3^{k\eta}=\frac{m3^{\frac{k}{2}}}{2^k}3^{k\eta}+m^{\eta}3^{k\eta}.	\end{split}\end{equation} Furthermore, \begin{equation}\label{beginningsecond}
		m3^{k\eta}+2^k3^{-\frac{k}{2}}m^{\eta}3^{k\eta}\leq 3^{\eta\log_2m}(m+2^{\log_2m}m^{\eta})= m^{1+\eta\log_23}(1+m^{\eta}),
		\end{equation} where we used that $k\leq\log_2m$ and $3^{-\frac{k}{2}}\leq 1$. Thus, combining (\ref{beginningfirst}) and (\ref{beginningsecond}), we get
	$$\ColN^k(m)\leq\frac{m3^{\frac{k}{2}}}{2^k}3^{k\eta}+m^{\eta}3^{k\eta}\leq \left(\cons\right)^k(m3^{k\eta}+2^k3^{-\frac{k}{2}}m^{\eta}3^{k\eta})\leq\left(\cons\right)^km^{1+\eta\log_23}(1+m^{\eta}) .$$
	Now, $m^{\eta\log_23}(1+m^{\eta})<m^\epsilon$, in case $m$ is sufficiently large as we chose $\eta$ such that $$(\log_23+1)\eta<\epsilon.$$
	Similarly,
	\begin{align*}
		\ColN^k(m)=\left(\frac{m}{2^k}+r_k(m)\right) 3^{\sum_{i=0}^{k-1}p(m)_i}\geq \left(\frac{m}{2^k}+r_k(m)\right) 3^{k\left(\frac{1}{2}-\eta\right)}\geq \frac{m3^{k\left(\frac{1}{2}-\eta\right)}}{2^k}	\end{align*} and $\frac{m3^{k\left(\frac{1}{2}-\eta\right)}}{2^k}\geq \frac{3^{\frac{k}{2}}}{2^k}m^{1-\epsilon}$ since $m^{\epsilon}\geq m^{\eta\log_23}= 3^{\eta\log_2m}\geq  3^{\eta k} $, which holds since $k\leq \log_2m$ and $\eta\log_23<\eta(\log_23+1)<\epsilon.$
Thus, a co-finite subset of $S\cap R$ is contained in $$\left\{m\in\positivenaturals\mid\forall 0\leq k\leq\log_2m:  \left(\cons\right)^km^{1-\epsilon}\leq \ColN^k(m)\leq \left(\cons\right)^km^{1+\epsilon}\right\}.$$
Hence, the proof is complete.
\end{proof}

The following lemma is crucial for the iterative argument in Lemma \ref{Iterate}.
	\begin{lem}\label{leverage}
		Suppose that $0<D\leq 1$, $K\in\positivenaturals$, and $S\subseteq\positivenaturals$ has $\fixdensity{D}$. Then, for every $0<\eta<\frac{D}{2-D}$, the set $$\left\{m\in\positivenaturals\mid \forall 0\leq k\leq \lfloor\eta \log_3m\rfloor\forall 0\leq i< K\cdot3^k:\ab 3^km+i\in S\right\}$$ has \fixdensity{(D(1+\eta)-2\eta)}.
	\end{lem}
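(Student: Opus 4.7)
The plan is to pass to the complement $T := (\N+1) \setminus S$, which by the $(D)$-density hypothesis satisfies $\#(T \cap \intervalA{1}{M}) \leq C M^{1-D}$ for some constant $C > 0$. Let $B$ denote the ``bad'' set of those $m \in \N+1$ for which there exist $k \leq \lfloor \eta \log_3 m \rfloor$ and $i \in \intervalN{0}{K \cdot 3^k}$ with $3^k m + i \in T$. It suffices to prove that $\#(B \cap \intervalA{1}{N}) = O(N^{1 - (D(1+\eta)-2\eta)})$.

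The core counting step is to fix $k \in \intervalA{0}{\lfloor \eta \log_3 N \rfloor}$ and consider the set of $m \in \intervalA{1}{N}$ such that $3^k m + i \in T$ for some $i \in \intervalN{0}{K \cdot 3^k}$. Every such $m$ arises from some $t \in T \cap \intervalA{1}{3^k(N+K)}$ via $3^k m + i = t$; conversely, for each fixed $t$ in this range the constraint forces $m$ into a half-open interval of length $K$, yielding at most $K$ values of $m$. Combined with the density hypothesis for $T$, this gives a bound $K C (3^k(N+K))^{1-D}$ on the number of bad $m$'s associated with this $k$.

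Now I union-bound over $k$ from $0$ to $\lfloor \eta \log_3 N \rfloor$. When $D < 1$ the resulting geometric series $\sum_k 3^{k(1-D)}$ is dominated by its last term and contributes a factor of $O(N^{\eta(1-D)})$, so $\#(B \cap \intervalA{1}{N}) = O(N^{(1+\eta)(1-D)})$. Dividing by $N$ gives relative density of $B$ in $\intervalA{1}{N}$ of order $O(N^{-(D(1+\eta)-\eta)})$, and since $D(1+\eta) - \eta \geq D(1+\eta) - 2\eta > 0$ by the hypothesis $\eta < D/(2-D)$, the complement of $B$ has the asserted $(D(1+\eta)-2\eta)$-density. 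The boundary case $D = 1$ is easier, since then $T$ is finite and any resulting logarithmic factor is absorbed into any positive power of $N$.

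The main technical subtlety is the counting step: a naive approach that directly sums $\#T$ without tracking multiplicities would overcount, so the cleaner viewpoint (each $t \in T$ is targeted by at most $K$ values of $m$, for each fixed $k$) is essential to land the geometric series with base $3^{1-D}$ rather than something larger. A secondary point is that the range of $k$ actually depends on $m$ via $\lfloor \eta \log_3 m \rfloor$, but for an upper bound on $\#(B \cap \intervalA{1}{N})$ it is legitimate to uniformly allow $k$ up to $\lfloor \eta \log_3 N \rfloor$.
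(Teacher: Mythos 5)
Your proof is correct, and it follows the same overall strategy as the paper (pass to the complement $T$, union-bound over $k$, and pull back the density bound on $T$ along $m\mapsto 3^km+i$), but it differs in two worthwhile ways. First, you count directly in $\intervalA{1}{N}$ against the definition of \density{C,D}, whereas the paper works on the triadic blocks $\intervalN{3^n}{3^{n+1}}$ and then invokes Lemma \ref{partitiondenseN} to convert the blockwise estimate into \fixdensity{D(1+\eta)-2\eta}; your route avoids that reduction entirely. Second, and more substantively, your counting step is sharper: for fixed $k$ you observe that each $t\in T$ can be written as $3^km+i$ with $0\leq i<K\cdot 3^k$ for at most $K$ values of $m$, giving a per-$k$ bound of order $K\,(3^k N)^{1-D}$, while the paper treats each $i$ separately via injectivity of $m\mapsto 3^km+i$ and then sums over all $K\cdot 3^k$ values of $i$, paying an extra factor $3^k$ (hence its geometric series has base $3^{2-D}$ rather than your $3^{1-D}$). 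As a result you obtain \fixdensity{D(1+\eta)-\eta}, which is stronger than the stated \fixdensity{D(1+\eta)-2\eta} and implies it since the density exponent is monotone and $D(1+\eta)-2\eta>0$ under the hypothesis $\eta<\frac{D}{2-D}$. Your separate treatment of $D=1$ (where $T$ is finite and your sum over $k$ is only logarithmic, easily absorbed into $N^{\eta}$) is also needed and correctly handled; interestingly, the paper's cruder bound keeps a genuine geometric series even at $D=1$, so it needs no case split there, though its appeal to Lemma \ref{partitiondenseN} is stated only for $D<1$. The one point to make explicit when writing this up is the enlargement of the constant to cover the finitely many small $N$ for which the asymptotic bound has not yet kicked in, so that the inequality in the definition of \density{C,D} holds for every $N\in\N+1$.
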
 
	\begin{proof}
		Choose $\eta$ such that $0<\eta<\frac{D}{2-D}$. We will show that for the set $$U^n=\left\{m\in\intervalN{3^n}{3^{n+1}}\mid \forall 0\leq k\leq \lfloor\eta \log_3m\rfloor\forall 0\leq i< K\cdot3^k:\ab 3^km+i\in S\right\}$$ we have $\mu_{\intervalN{3^n}{3^{n+1}}}(U^n)\geq 1-\frac{C_0}{3^{n(D(1+\eta)-2\eta)}}$ for some $C_0>0$,	which implies the claim by Lemma \ref{partitiondenseN} with $a=3$.	Define $R=\positivenaturals\setminus S$ and $R_M=\intervalA{1}{M}\cap R$.
		By hypothesis, we have $\frac{\#R_M}{M}\leq \frac{C}{M^D}$ for all $M\in\positivenaturals$ and some $C>0$.
		
		 For $ k\leq {\lfloor\eta (n+1)\rfloor}$ and $0\leq i< K\cdot 3^k$, consider the set $Q^n_{k,i}=\left\{m\in\intervalN{3^n}{3^{n+1}}\mid m\cdot 3^k+i\in R\right\}$. Since the map from $\intervalN{3^n}{3^{n+1}}$ to $\intervalN{1}{(3^k3^{n+1}+i)}$ that sends $m$ to $m\cdot 3^k+i$ is injective, we have $\#Q^n_{k,i}\leq \#R_{3^{n+1}3^k+i}$. Thus,		
		
		$$\frac{\#Q^n_{k,i}}{3^{n+1}-3^n}
		\leq
		\frac{\#R_{3^{n+1}3^k+i}}{2\cdot 3^{n}} \leq
		\frac{C(3^{n+1}3^{k}+i)}{2\cdot 3^{n}(3^{n+1}3^{k}+i)^D}
		=
		\frac{C(3^{n+1}3^{k}+i)^{1-D}}{2\cdot 3^{n}}\leq \frac{C3^{(1-D)k}(3^{n+1}+K)^{1-D}}{2\cdot 3^{n}}.$$
		For the last inequality, we used that $0\leq i< K\cdot 3^k$.
		Hence,
		\begin{align*}
			&\frac{\#\bigcup_{k=0}^{ \lfloor\eta (n+1)\rfloor }\bigcup_{i=0}^{ K\cdot 3^k-1}{Q^n_{k,i}}}{2\cdot 3^{n}}
			\leq
			\sum_{k=0}^{\lfloor\eta (n+1)\rfloor }\ab\ab\sum_{i=0}^{K\cdot 3^k-1} \frac{C3^{(1-D)k}(3^{n+1}+K)^{1-D}}{2\cdot 3^{n}}\\
			&=
		\frac{C(3^{n+1}+K)^{1-D}}{2\cdot 3^{n}}\sum_{k=0}^{ \lfloor\eta (n+1)\rfloor }\ab\ab\sum_{i=0}^{K\cdot 3^k-1} 3^{(1-D)k}
			=
			K\frac{C(3^{n+1}+K)^{1-D}}{2\cdot 3{n}}\sum_{k=0}^{\lfloor\eta (n+1)\rfloor } 3^{(2-D)k}\\ 
			&<
			K\frac{C(3^{n+1}+K)^{1-D}}{2\cdot 3^{n}}\frac{3^{(2-D)(\eta n+\eta+1)}}{3^{2-D}-1}
			=
				K\frac{C(3^{n+1}+K)^{1-D}}{2\cdot 3^{n}}\frac{3^{(2-D)(1+\eta)}3^{n(2-D)\eta}}{3^{2-D}-1}\\
				&
				=K\frac{C(3+\frac{K}{3^{n}})^{1-D}}{2}\frac{3^{n(1-D)}}{3^{n}}\frac{3^{(2-D)(1+\eta)}3^{n(2-D)\eta}}{3^{2-D}-1}
					=\frac{K}{2}\frac{C(3+\frac{K}{3^{n}})^{1-D}3^{(2-D)(1+\eta)}}{3^{2-D}-1}3^{n((2-D)\eta-D)}\\
					 &\leq\frac{K}{2}\frac{C(3+K)^{1-D}3^{(2-D)(1+\eta)}}{3^{2-D}-1}3^{n(2\eta-D(1+\eta))}.\\
		\end{align*}
		Where the second inequality follows from $$\sum_{k=0}^{\lfloor\eta (n+1)\rfloor } 3^{(2-D)k}=\frac{3^{(2-D)(\lfloor\eta (n+1)\rfloor+1)}-1}{ 3^{2-D}-1}<\frac{3^{(2-D)(\eta (n+1)+1)}}{ 3^{2-D}-1}.$$ 
			Put $$B^n=\intervalN{3^n}{3^{n+1}}\setminus \bigcup_{k=0}^{\lfloor\eta (n+1)\rfloor }\bigcup_{i=0}^{K\cdot 3^k-1}{Q^n_{k,i}}.$$ Then, $\mu_{\intervalN{3^n}{3^{n+1}}}(B^n)\geq 1-\frac{C_0}{3^{n(D(1+\eta)-2\eta)}}$ for $C_0=\frac{K}{2}\frac{C(3+K)^{1-D}3^{(2-D)(1+\eta)}}{3^{2-D}-1}$. Furthermore, $B^n\subseteq U^n$ as $\lfloor\eta\log_3m\rfloor\leq \lfloor\eta (n+1)\rfloor$ for all $m\in\intervalN{3^n}{3^{n+1}}$. Thus, the claim follows from Lemma \ref{partitiondenseN} with $a=3$.
	\end{proof}
\begin{lem}\label{scale}
	Suppose that $\alpha\in(0,\log_32]$, $0<D\leq 1$ and $S\subseteq\positivenaturals$ has \fixdensity{D}. Then $$\left\{m\in\positivenaturals\mid \left\lfloor\frac{3^{\lfloor\alpha\lfloor\log_2m\rfloor\rfloor}}{2^{\lfloor\log_2m\rfloor}}m\right\rfloor\in S\right\}$$ has \fixdensity{D\alpha\log_23}.
\end{lem}
\begin{proof}
		Choose $C>0$ such that $S$ has \density{C,D}. Put $R=\positivenaturals\setminus S$.  Consider the map $$\Phi:\intervalN{2^n}{2^{n+1}}\rightarrow\intervalN{{3^{\lfloor\alpha n\rfloor}}}{{2\cdot3^{\lfloor\alpha n\rfloor}}};\ab m\mapsto \left\lfloor\frac{3^{\lfloor\alpha n\rfloor}}{2^{n}}m\right\rfloor.$$ Then, for any $M\in \intervalN{{3^{\lfloor\alpha n\rfloor}}}{{2\cdot3^{\lfloor\alpha n\rfloor}}}$ we have
		$$\Phi(m)=M\Leftrightarrow M\leq \frac{3^{\lfloor \alpha n\rfloor}}{2^n}m< M+1\Leftrightarrow \frac{2^nM}{3^{\lfloor \alpha n\rfloor}}\leq m< \frac{2^nM}{3^{\lfloor \alpha n\rfloor}}+\frac{2^n}{3^{\lfloor \alpha n\rfloor}} .$$
		Thus, $\Phi^{-1}(\left\{M\right\})=\left[\frac{2^n}{3^{\lfloor\alpha n\rfloor}}M,M\frac{2^n}{3^{\lfloor\alpha n\rfloor}}+\frac{2^n}{3^{\lfloor\alpha n\rfloor}}\right)\cap\N$. Hence, $\#\Phi^{-1}(\left\{M\right\})\leq \left\lceil\frac{2^n}{3^{\lfloor\alpha n\rfloor}}\right\rceil$. We conclude that \begin{align*}
			&\#\left\{m\in\intervalN{2^n}{2^{n+1}}\mid \left\lfloor\frac{3^{\lfloor\alpha n\rfloor}}{2^{n}}m\right\rfloor\in R\right\}\\&= \#\left\{m\in\intervalN{2^n}{2^{n+1}}\mid \left\lfloor\frac{3^{\lfloor\alpha n\rfloor}}{2^{n}}m\right\rfloor\in R\cap \intervalN{1}{2\cdot3^{\lfloor\alpha n\rfloor}}\right\}\\
			&\leq\left\lceil\frac{2^n}{3^{\lfloor\alpha n\rfloor}}\right\rceil\#R\cap \intervalN{1}{2\cdot3^{\lfloor\alpha n\rfloor}}\leq \left\lceil\frac{2^n}{3^{\lfloor\alpha n\rfloor}}\right\rceil\frac{C}{(2\cdot3^{\lfloor\alpha n\rfloor})^D}2\cdot3^{\lfloor\alpha n\rfloor}\\
			&\leq \left(\frac{2^n}{3^{\lfloor\alpha n\rfloor}}+1\right)\frac{C}{(2\cdot3^{\alpha n-1})^D}2\cdot3^{\lfloor\alpha n\rfloor}\leq (2^n+3^{\alpha n})\frac{3^D2^{1-D}C}{2^{D\alpha n\log_23}}.\end{align*} Thus, $$\frac{\#\left\{m\in\intervalN{2^n}{2^{n+1}}\mid \left\lfloor\frac{3^{\lfloor\alpha n\rfloor}}{2^{n}}m\right\rfloor\in R\right\}}{2^{n+1}-2^n}\leq \frac{2^n+3^{\alpha n}}{2^n}\frac{3^D2^{1-D}C}{2^{D\alpha n\log_23}}\leq 2\frac{3^D2^{1-D}C}{2^{nD\alpha \log_23}}. $$ For the last inequality, we used that $3^{\alpha n}\leq 2^n$ as $0<\alpha\leq\log_32$.
		Using Lemma \ref{partitiondenseN} with $a=2$, we conclude that the set $$\left\{m\in\positivenaturals\mid \left\lfloor\frac{3^{\lfloor\alpha\lfloor\log_2m\rfloor\rfloor}}{2^{\lfloor\log_2m\rfloor}}m\right\rfloor\in S\right\}$$ has \fixdensity{D\alpha\log_23}.
\end{proof}
	\begin{lem}\label{imagesareclose}
		Suppose that $\eta>0$. Then the set $$\left\{m\in\positivenaturals\mid \exists\ab 0\leq L\leq 2\eta \lfloor\log_2m\rfloor\exists\ab  0\leq i< 2\cdot 3^L: \ColN^{\lfloor\log_2m\rfloor}(m)=\left\lfloor\frac{3^{\left\lfloor\left(\frac{1}{2}-\eta\right)\lfloor\log_2m\rfloor\right\rfloor}}{2^{\lfloor\log_2m\rfloor}}m\right\rfloor3^L+i\right\}$$ is \stardense.
	\end{lem}
\begin{proof}
	Let $\delta>0$ and $0<\eta^\prime<\eta$. By Lemma \ref{rest}, we know that the set $$\left\{m\in\positivenaturals\mid\forall 0\leq k\leq \log_2m: r_k(m) 3^{\frac{k}{2}}m^{-\delta}<1\right\}$$ is \stardense. In particular, the set $$\left\{m\in\positivenaturals\mid r_{\lfloor\log_2m\rfloor}(m)\cdot 3^{\frac{\lfloor\log_2m\rfloor}{2}}m^{-\delta}<1\right\}$$ is \stardense\ab as well. If we choose $\delta>0$ small enough, we can ensure that $$3^{\frac{\lfloor\log_2m\rfloor}{2}}m^{-\delta}> 3^{\left\lfloor\left(\frac{1}{2}-\eta\right)\lfloor\log_2m\rfloor\right\rfloor}$$ for sufficiently large $m$. Thus, $$S=\left\{m\in\positivenaturals\mid r_{\lfloor\log_2m\rfloor}(m)\cdot 3^{\left\lfloor\left(\frac{1}{2}-\eta\right)\lfloor\log_2m\rfloor\right\rfloor}<1\right\}$$ is also \stardense. By Lemma \ref{paritystar}, we know that the set $$Q=\left\{m\in\positivenaturals\mid\left(\frac{1}{2}-\eta\right)\lfloor\log_2 m\rfloor\leq  \sum_{k=0}^{\lfloor\log_2m\rfloor-1}p(m)_k\leq \left(\frac{1}{2}+\eta^\prime\right)\lfloor\log_2m\rfloor\right\}$$ is \stardense.
	Let $m\in S\cap Q$. By Lemma \ref{ColIter}, we obtain \begin{equation}\label{imagesareclose1}
	\ColN^{\lfloor\log_2 m\rfloor}(m)=\left(\frac{m}{2^{\lfloor\log_2m\rfloor}}+r_{\lfloor\log_2m\rfloor}(m)\right)\cdot 3^{\sum_{k=0}^{\lfloor\log_2m \rfloor-1}p(m)_k}.\end{equation} Now, since $m\in Q$ we have $${\left(\frac{1}{2}-\eta\right)\lfloor\log_2m \rfloor}\leq {\sum_{k=0}^{\lfloor\log_2m \rfloor-1}p(m)_k}\leq {\left(\frac{1}{2}+\eta^\prime\right)\lfloor\log_2m \rfloor}.$$ We conclude that $$0\leq \sum_{k=0}^{\lfloor\log_2m\rfloor-1}p(m)_k-\left\lfloor\left(\frac{1}{2}-\eta\right)\lfloor\log_2m\rfloor\right\rfloor < \left(\eta+\eta^\prime\right)\lfloor\log_2m\rfloor+1.$$
	Thus, we have $$\sum_{k=0}^{\lfloor\log_2m\rfloor-1}p(m)_k=L+\left\lfloor\left(\frac{1}{2}-\eta\right)\lfloor\log_2m\rfloor\right\rfloor$$ for some $0\leq L\leq\left(\eta+\eta^\prime\right) \lfloor\log_2m\rfloor+1\leq 2\eta \lfloor\log_2m\rfloor$, in case $m$ is sufficiently large, as $\eta^\prime<\eta$. Hence, with (\ref{imagesareclose1}) we get \begin{align*}
		&\ColN^{\lfloor\log_2m\rfloor}(m)=\left(\frac{m}{2^{\lfloor\log_2m\rfloor}}+r_{\lfloor\log_2m\rfloor}(m)\right)\cdot 3^{\left\lfloor\left(\frac{1}{2}-\eta\right)\lfloor\log_2m \rfloor\right\rfloor+L}\\&=\left\lfloor\frac{3^{\left\lfloor\left(\frac{1}{2}-\eta\right)\lfloor\log_2m\rfloor\right\rfloor}}{2^{\lfloor\log_2m\rfloor}}m\right\rfloor3^L+\left(\frac{3^{\left\lfloor\left(\frac{1}{2}-\eta\right)\lfloor\log_2m\rfloor\right\rfloor}}{2^{\lfloor\log_2m\rfloor}}m-\left\lfloor\frac{3^{\left\lfloor(\frac{1}{2}-\eta)\lfloor\log_2m\rfloor\right\rfloor}}{2^{\lfloor\log_2m\rfloor}}m\right\rfloor\right)3^L\\&+r_{\lfloor\log_2m\rfloor}(m)\cdot 3^{\left\lfloor\left(\frac{1}{2}-\eta\right)\lfloor\log_2m\rfloor\right\rfloor}3^L=\left\lfloor\frac{3^{\left\lfloor(\frac{1}{2}-\eta)\lfloor\log_2m\rfloor\right\rfloor}}{2^{\lfloor\log_2m\rfloor}}m\right\rfloor3^L+i\end{align*}
	 with $0\leq i< 2\cdot3^{L}$ as $$0\leq\frac{3^{\left\lfloor\left(\frac{1}{2}-\eta\right)\lfloor\log_2m\rfloor\right\rfloor}}{2^{\lfloor\log_2m\rfloor}}m-\left\lfloor\frac{3^{\left\lfloor\left(\frac{1}{2}-\eta\right)\lfloor\log_2m\rfloor\right\rfloor}}{2^{\lfloor\log_2m\rfloor}}m\right\rfloor<1$$ and, as $m\in S$, $$0\leq r_{\lfloor\log_2m\rfloor}(m)\cdot 3^{\left\lfloor(\frac{1}{2}-\eta)\lfloor\log_2m\rfloor\right\rfloor}<1.$$ Furthermore, $i\in\N$ as $\ColN^{\lfloor\log_2m\rfloor}(m)\in\N$ and $\left\lfloor\frac{3^{\left\lfloor\left(\frac{1}{2}-\eta\right)\lfloor\log_2m\rfloor\right\rfloor}}{2^{\lfloor\log_2m\rfloor}}m\right\rfloor3^L\in\N$.
	 
	 Since $S\cap Q$ is \stardense, the claim follows.
\end{proof}
Now we turn to the most crucial lemma for the proof of the Main Theorem.
	\begin{lem}\label{Iterate}
		Suppose that $P\subseteq\positivenaturals$ is \stardense. Then the set $$\left\{m\in\positivenaturals\mid \ColN^{\lfloor\log_2m\rfloor}(m)\in P\right\}$$ is \stardense\ab as well.
	\end{lem}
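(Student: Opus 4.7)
The plan is to build a star-dense subset of $\{m : \ColN^n(m) \in P\}$ (where $n = \lfloor \log_2 m\rfloor$) by combining Lemma \ref{scale} (which gives a star-dense set of $m$ whose rescaled floor $\lfloor m \cdot 3^{\lfloor \alpha n\rfloor}/2^n\rfloor$ lies in a prescribed star-dense set) and Lemma \ref{leverage} (which, starting from $P$, produces a star-dense set $P^*$ such that each $M \in P^*$ comes equipped with whole windows $3^k M + i$ lying in $P$). The matching between $\ColN^n(m)$ and such a window is done through the explicit formula in Lemma \ref{ColIter}.

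Fix $\epsilon > 0$ small enough that $10\epsilon < D_P/(2 - D_P)$, where $D_P$ is the density exponent of $P$, and set $\alpha := \tfrac{1}{2} - 3\epsilon$, $\eta := 10\epsilon$, $K := 2$. Let $G$ be the intersection of the star-dense sets provided by Lemma \ref{paritystar} (so $|s(m) - n/2| < \epsilon n$ with $s(m) := \sum_{i=0}^{n-1} p(m)_i$) and Lemma \ref{rest} (so $r_n(m) \cdot 3^{n/2} < m^{\epsilon}$). Apply Lemma \ref{leverage} to $P$ with parameters $K, \eta$ to obtain a star-dense $P^*$ such that every $M \in P^*$ satisfies $3^k M + i \in P$ whenever $0 \leq k \leq \eta \log_3 M$ and $0 \leq i < K \cdot 3^k$. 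Then apply Lemma \ref{scale} with the fixed $\alpha$ to $P^*$ to obtain the star-dense set $T := \{m : \lfloor m \cdot 3^{\lfloor \alpha n \rfloor}/2^n \rfloor \in P^*\}$.

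I claim that $\ColN^n(m) \in P$ for every $m \in G \cap T$ with $n$ sufficiently large, which proves the lemma since $G \cap T$ is star-dense. For such $m$, put $M^* := \lfloor m \cdot 3^{\lfloor \alpha n \rfloor}/2^n \rfloor \in P^*$ and $k^* := s(m) - \lfloor \alpha n\rfloor$. Writing $\delta := m \cdot 3^{\lfloor \alpha n \rfloor}/2^n - M^* \in [0,1)$, Lemma \ref{ColIter} gives
$$\ColN^n(m) = \left(\tfrac{m}{2^n} + r_n(m)\right) \cdot 3^{s(m)} = 3^{k^*}(M^* + \delta) + r_n(m) \cdot 3^{s(m)} = 3^{k^*} M^* + i^*,$$
with $i^* := 3^{k^*}\delta + r_n(m) \cdot 3^{s(m)}$. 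The $G$-bounds give $k^* \in [2\epsilon n,\, 4\epsilon n + O(1)]$ and $r_n(m) \cdot 3^{s(m)} \leq m^\epsilon \cdot 3^{\epsilon n} \leq C\cdot m^{\epsilon(1 + \log_2 3)}$. The crucial inequality is $k^* > \epsilon(1 + \log_3 2) n + O(1)$, which holds for $n$ large because $\log_3 2 < 1$ forces $2\epsilon > \epsilon(1 + \log_3 2)$; it yields $3^{k^*} > C\cdot m^{\epsilon(1 + \log_2 3)} \geq r_n(m) \cdot 3^{s(m)}$, hence $i^* < 2 \cdot 3^{k^*} = K \cdot 3^{k^*}$. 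Also $k^* \leq 4\epsilon n + O(1) \leq \eta \log_3 M^*$ since $\log_3 M^* \approx \lfloor\alpha n\rfloor \approx (1/2 - 3\epsilon) n$. The defining hypotheses of $P^*$ are therefore satisfied, so $\ColN^n(m) = 3^{k^*} M^* + i^* \in P$.

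The main obstacle is the joint choice of $\alpha, \eta, K$: $\alpha$ must lie strictly below $1/2$ by $\Theta(\epsilon)$ so that $k^*$ is linear in $\epsilon n$ and $3^{k^*}$ absorbs the $m^{O(\epsilon)}$ error from $r_n(m) \cdot 3^{s(m)}$, while $\eta$ must be $\Theta(\epsilon)$ both to remain below $D_P/(2-D_P)$ (required to keep $P^*$ star-dense by Lemma \ref{leverage}) and to allow $k^* \leq \eta \log_3 M^*$. The numerical slack afforded by $\log_3 2 < 1$ is exactly what makes the balance $(\alpha, \eta, K) = (\tfrac12 - 3\epsilon, 10\epsilon, 2)$ work; any weaker gap between $\alpha$ and $1/2$ would cause $3^{k^*}$ to be too small to absorb the error term.
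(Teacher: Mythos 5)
Your proposal is correct and follows essentially the same route as the paper's own proof: decompose $\ColN^{\lfloor\log_2 m\rfloor}(m)$ via Lemma \ref{ColIter} as $3^{k}M+i$ with $M=\lfloor 3^{\lfloor\alpha n\rfloor}2^{-n}m\rfloor$ for $\alpha$ just below $\tfrac12$, control $k$ and the remainder with Lemmas \ref{paritystar} and \ref{rest}, and conclude by intersecting with the sets from Lemma \ref{leverage} (with $K=2$) and Lemma \ref{scale}. One small bookkeeping point: the stated condition $10\epsilon<D_P/(2-D_P)$ alone does not guarantee $4\epsilon n+O(1)\leq 10\epsilon\lfloor(\tfrac12-3\epsilon)n\rfloor$ (that needs $\epsilon<\tfrac{1}{30}$), but since $\epsilon$ is a free small parameter this is harmless and does not affect the argument.
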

	\begin{proof}

	 Choose $0<D<1$ such that $P$ has \fixdensity{D}. Also choose $0<\eta^\prime<\frac{D}{2-D}$ and let $$P^\prime= \left\{m\in\positivenaturals\mid \forall k\leq \lfloor\eta^\prime \log_3m\rfloor\forall 0\leq i< 2\cdot3^k:\ab 3^km+i\in P\right\}.$$ Applying Lemma \ref{leverage} with $K=2$, we obtain that $P^\prime$ is \stardense. Choose $\eta>0$ such that \begin{equation}\label{interatefirst}
	 2\eta \lfloor\log_2m\rfloor\leq \left\lfloor\eta^\prime\log_3 \left\lfloor\frac{3^{\left\lfloor\left(\frac{1}{2}-\eta\right)\lfloor\log_2m\rfloor\right\rfloor}}{2^{\lfloor\log_2m\rfloor}}m\right\rfloor\right\rfloor\end{equation} for all sufficiently large $m\in\positivenaturals$. This is possible as for all sufficiently large $m$ we have (using that $\log_3(M-1)>\log_3M-1$ for all $M\geq 2$)
	 $$\left\lfloor\eta^\prime\log_3 \left\lfloor\frac{3^{\left\lfloor\left(\frac{1}{2}-\eta\right)\lfloor\log_2m\rfloor\right\rfloor}}{2^{\lfloor\log_2m\rfloor}}m\right\rfloor\right\rfloor\geq \eta^\prime\log_3 \left(\frac{3^{\left\lfloor\left(\frac{1}{2}-\eta\right)\lfloor\log_2m\rfloor\right\rfloor}}{2^{\log_2m}}m\right)-\eta^\prime-1\geq \eta^\prime\left(\frac{1}{2}-\eta\right)\lfloor\log_2m\rfloor-2\eta^\prime-1 $$ and
	 $$2\eta\lfloor\log_2m\rfloor\leq\eta^\prime\left(\frac{1}{2}-\eta\right)\lfloor\log_2m\rfloor-2\eta^\prime -1$$  for sufficiently large $m$ if $0<\eta<\frac{\eta^\prime}{2\left(\eta^\prime+2\right)}$.
	  Put $$P^{\prime\prime}=\left\{m\in\positivenaturals\mid \left\lfloor\frac{3^{\left\lfloor\left(\frac{1}{2}-\eta\right)\lfloor\log_2m\rfloor\right\rfloor}}{2^{\lfloor\log_2m\rfloor}}m\right\rfloor\in P^{\prime}\right\}.$$ Applying Lemma \ref{scale} with $\alpha=\frac{1}{2}-\eta$, we get that
	$P^{\prime\prime}$ is \stardense. 
	By Lemma \ref{imagesareclose}, the set \begin{equation*}
		S=\left\{m\in\positivenaturals\mid \exists\ab 0\leq k\leq 2\eta \lfloor\log_2m\rfloor\exists\ab  0\leq i< 2\cdot 3^k: \ColN^{\lfloor\log_2m\rfloor}(m)=\left\lfloor\frac{3^{\left\lfloor\left(\frac{1}{2}-\eta\right)\lfloor\log_2m\rfloor\right\rfloor}}{2^{\lfloor\log_2m\rfloor}}m\right\rfloor3^k+i\right\}\end{equation*} is \stardense. Let $m\in S\cap P^{\prime\prime}$.	Since $m\in S$,  we have that 	$$\ColN^{\lfloor\log_2m\rfloor}(m)=\left\lfloor\frac{3^{\left\lfloor\left(\frac{1}{2}-\eta\right)\lfloor\log_2m\rfloor\right\rfloor}}{2^{\lfloor\log_2m\rfloor}}m\right\rfloor3^{L_0}+j_0$$ for some $0\leq L_0\leq 2\eta \lfloor\log_2m\rfloor$ and $0\leq j_0< 2\cdot3^{L_0}$. Since $m\in P^{\prime\prime}$, we conclude that $$\left\lfloor\frac{3^{\left\lfloor\left(\frac{1}{2}-\eta\right)\lfloor\log_2m\rfloor\right\rfloor}}{2^{\lfloor\log_2m\rfloor}}m\right\rfloor\in P^{\prime}.$$ Thus, by definition of $P^{\prime}$ we have
	$$\left\lfloor\frac{3^{\left\lfloor\left(\frac{1}{2}-\eta\right)\lfloor\log_2m\rfloor\right\rfloor}}{2^{\lfloor\log_2m\rfloor}}m\right\rfloor3^k+i\in P$$ for all $0\leq k\leq\left\lfloor\eta^\prime\log_3\left\lfloor\frac{3^{\left\lfloor\left(\frac{1}{2}-\eta\right)\lfloor\log_2m\rfloor\right\rfloor}}{2^{\lfloor\log_2m\rfloor}}m\right\rfloor\right\rfloor$ and $0\leq i\leq 2\cdot3^k$. In particular, 
	 $$\ColN^{\lfloor\log_2m\rfloor}(m)=\left\lfloor\frac{3^{\left\lfloor\left(\frac{1}{2}-\eta\right)\lfloor\log_2m\rfloor\right\rfloor}}{2^{\lfloor\log_2m\rfloor}}m\right\rfloor3^{L_0}+j_0\in P$$ as by (\ref{interatefirst}) we have $$L_0\leq 2\eta \lfloor\log_2m\rfloor\leq \eta^\prime\left\lfloor\log_3 \left\lfloor\frac{3^{\left\lfloor\left(\frac{1}{2}-\eta\right)\lfloor\log_2m\rfloor\right\rfloor}}{2^{\lfloor\log_2m\rfloor}}m\right\rfloor \right\rfloor. $$ Thus, the proof is complete as $S\cap P^{\prime\prime}$ is \stardense.
	\end{proof}
	We need one more lemma before we are going to prove Theorem \ref{main}.
	\begin{lem}\label{longtermparity}
		Suppose that $\lambda\in [0,1]$. If the set  \begin{align*}
		&\Biggl\{m\in\positivenaturals\mid\forall 0\leq k\leq \frac{1-\lambda}{1-\log_2\sqrt{3}}\log_2m:  \left(\cons\right)^km^{1-\epsilon}\leq \ColN^k(m)\leq \left(\cons\right)^km^{1+\epsilon}\\&\wedge r_k(m)3^{\frac{k}{2}}m^{-\epsilon}<1\Biggr\}\end{align*} is \stardense\ab for every $\epsilon>0$, then the set  $$\left\{m\in\positivenaturals\mid\forall 0\leq k\leq \frac{1-\lambda}{1-\log_2\sqrt{3}}\log_2m:  \frac{k}{2}-\epsilon\log_2m\leq\sum_{i=0}^{k-1}p(m)_i\leq \frac{k}{2}+\epsilon\log_2m\right\}$$ is \stardense\ab for every $\epsilon>0$.
	\end{lem}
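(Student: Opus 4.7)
The plan is to invert the identity from Lemma \ref{ColIter}, namely $\ColN^k(m) = (m/2^k + r_k(m)) \cdot 3^{S_k}$ with $S_k := \sum_{i=0}^{k-1} p(m)_i$, and read off two-sided bounds on $S_k$ from the two hypothesized controls on $\ColN^k(m)$ and $r_k(m)$. Fix a target $\epsilon_0 > 0$; I will invoke the hypothesis at a sufficiently small $\epsilon = \epsilon(\epsilon_0, \lambda)$ and argue that the resulting \stardense{} set already works, since all the relevant conditions are pointwise inequalities in $m$ and both intersections and finite modifications preserve \stardensity.

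Solving the identity for $3^{S_k}$ gives $3^{S_k} = \ColN^k(m)/(m/2^k + r_k(m))$. Using $r_k(m) \geq 0$ together with the upper bound $\ColN^k(m) \leq (3^{1/2}/2)^k m^{1+\epsilon}$ yields $3^{S_k} \leq 2^k \ColN^k(m)/m \leq 3^{k/2} m^\epsilon$, and hence $S_k - k/2 \leq \epsilon \log_3 m$. In the opposite direction, the hypothesis $r_k(m)\,3^{k/2} m^{-\epsilon} < 1$ gives $m/2^k + r_k(m) \leq m/2^k + m^\epsilon 3^{-k/2}$, which combined with $\ColN^k(m) \geq (3^{1/2}/2)^k m^{1-\epsilon}$ produces
\begin{equation*}
3^{S_k} \geq \frac{3^{k/2} m^{1-\epsilon}}{m + m^\epsilon (2/3^{1/2})^k}.
\end{equation*}
Since $(2/3^{1/2})^k = 2^{k(1 - \log_2 3 / 2)} \leq m^{1-\lambda}$ throughout the allowed range $0 \leq k \leq \tfrac{1-\lambda}{1-\log_2 3/2}\log_2 m$, the denominator is at most $m + m^{1+\epsilon-\lambda} \leq 2\, m^{1+\max(0,\,\epsilon-\lambda)}$, and hence $S_k - k/2 \geq -\log_3 2 - (\epsilon + \max(0,\epsilon-\lambda))\log_3 m$.

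Combining the two estimates, $|S_k - k/2| \leq 2\epsilon\,\log_3 m + \log_3 2$ uniformly in $k$ in the prescribed range, for every $m$ in the \stardense{} set supplied by the hypothesis. Choosing $\epsilon$ small enough that $2\epsilon/\log_2 3 < \epsilon_0$ and removing the finitely many $m$ for which the additive constant $\log_3 2$ is not absorbed into $\epsilon_0 \log_2 m$ (such a finite exclusion preserves \stardensity) delivers the claim. I expect the only genuine subtlety to be the endpoint $\lambda = 0$: when $k$ is near its upper limit the quantities $m/2^k$ and the a priori bound $m^\epsilon 3^{-k/2}$ on $r_k(m)$ are of comparable size, so the denominator cannot be collapsed to $O(m)$; this is exactly why the lower bound on $S_k - k/2$ picks up the extra factor of $2$ in the $\log_3 m$ coefficient, and this loss is harmless because $\epsilon$ is at our disposal.
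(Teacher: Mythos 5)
Your proposal is correct and follows essentially the same route as the paper: invert the identity $\ColN^k(m)=(m/2^k+r_k(m))3^{S_k}$, use $r_k(m)\geq 0$ with the upper bound on $\ColN^k(m)$ to get $S_k\leq k/2+\epsilon\log_3 m$, and use the bound $r_k(m)<m^{\epsilon}3^{-k/2}$ with the lower bound on $\ColN^k(m)$, together with $(2/3^{1/2})^k\leq m^{1-\lambda}\leq m$ on the allowed range of $k$, to get the matching lower bound with the same harmless factor-of-two loss that is absorbed by shrinking $\epsilon$. The only cosmetic difference is that you keep the $\lambda$-dependence in the bound $(2/3^{1/2})^k\leq m^{1-\lambda}$ where the paper simply uses $\leq m$; both yield the paper's estimate $S_k\geq k/2-2\epsilon\log_3 m-\log_3 2$ in the worst case.
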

\begin{proof}
Suppose that $\lambda\in[0,1]$. Let $\epsilon>0$ and suppose that \begin{align*}
&S_\delta=\\&\Biggl\{m\in\positivenaturals\mid\forall 0\leq k\leq \frac{1-\lambda}{1-\log_2\sqrt{3}}\log_2m:  \left(\cons\right)^km^{1-\delta}\leq \ColN^k(m)\leq\left(\cons\right)^km^{1+\delta}\\&\wedge r_k(m)3^{\frac{k}{2}}m^{-\delta}<1\Biggr\}\end{align*} is \stardense\ab for every $\delta>0$. Suppose that $m\in S_\delta$. Then, using Lemma \ref{ColIter}, we get
$$\frac{m}{2^{k}}3^{\sum_{i=0}^{k-1}p(m)_i}\leq \left(\frac{m}{2^{k}}+r_k(m)\right)3^{\sum_{i=0}^{k-1}p(m)_i}=\ColN^{k}(m)\leq \left(\cons\right)^{k}m^{1+\delta} $$
for every $0\leq k\leq \frac{1-\lambda}{1-\log_2\sqrt{3}}\log_2m$.
Thus, $3^{\sum_{i=0}^{k-1}p(m)_i}\leq 3^{\frac{k}{2}}m^{\delta}$ and therefore \begin{equation}\label{longtermparityfourth}
\sum_{i=0}^{k-1}p(m)_i\leq \frac{k}{2}+\delta\log_3m=\frac{k}{2}+\delta(\log_32)\log_2m.\end{equation}
Similarly, 
\begin{equation*}\left(\cons\right)^{k}m^{1-\delta}\leq\ColN^{k}(m)=\left(\frac{m}{2^{k}}+r_k(m)\right)3^{\sum_{i=0}^{k-1}p(m)_i}\leq \left(\frac{m}{2^{k}}+m^{\delta} 3^{-\frac{k}{2}}\right)3^{\sum_{i=0}^{k-1}p(m)_i}.\end{equation*}
Thus,
\begin{equation}\label{longtermparitysecond}3^{\frac{k}{2}}m^{-\delta}\leq \left(1+2^km^{-1+\delta} 3^{-\frac{k}{2}}\right)3^{\sum_{i=0}^{k-1}p(m)_i}.\end{equation}
Now, since $k\leq \frac{\log_2m}{1-\log_2\sqrt{3}}$, we obtain \begin{equation}\label{longtermparitythird}
m^{-1+\delta} \left(\frac{2}{\sqrt{3}}\right)^k\leq m^{-1+\delta} \left(\frac{2}{\sqrt{3}}\right)^{\frac{\log_2 m}{1-\log_2\sqrt{3}}} =m^{-1+\delta}\cdot m=m^\delta.\end{equation} Hence, combining (\ref{longtermparitysecond}) and (\ref{longtermparitythird}) we get
$$3^{\frac{k}{2}}m^{-\delta}\leq \left(1+2^km^{-1+\delta} 3^{-\frac{k}{2}}\right)3^{\sum_{i=0}^{k-1}p(m)_i} \leq \left(1+m^\delta\right)3^{\sum_{i=0}^{k-1}p(m)_i}\leq 2\cdot m^\delta3^{\sum_{i=0}^{k-1}p(m)_i}.$$
Thus, 
$$\frac{1}{2}m^{-2\delta}3^{\frac{k}{2}}\leq 3^{\sum_{i=0}^{k-1}p(m)_i},$$ or
$$\frac{k}{2}-2\delta\log_3m-\log_32\leq \sum_{i=0}^{k-1}p(m)_i.$$ Thus, if $2\delta\log_32<\epsilon$ and $m$ is sufficiently large, together with (\ref{longtermparityfourth}), we obtain $$\frac{k}{2}-\epsilon\log_2m\leq\sum_{i=0}^{k-1}p(m)_i\leq \frac{k}{2}+\epsilon\log_2m.$$
\end{proof}
	Now, we have all the tools to prove Theorem \ref{main}. We will prove a slightly stronger version, that bounds $r_k(m)$ as well.
	\begin{thm}\label{secondmaintheorem}
		Suppose that $\epsilon>0$. Then the set \begin{align*}
		&\left\{m\in\positivenaturals\mid\forall 0\leq k\leq \frac{\log_2m}{1-\log_2\sqrt{3}}:  \left(\cons\right)^km^{1-\epsilon}\leq \ColN^k(m)\leq \left(\cons\right)^km^{1+\epsilon}\wedge r_k(m)3^{\frac{k}{2}}m^{-\epsilon}<1\right\}\end{align*} is \stardense.
	\end{thm}
	\begin{proof}
		First, we show that it is enough to show that the set  \begin{align*}
			&S^\lambda_\epsilon=\Biggl\{m\in\positivenaturals\mid\forall 0\leq k\leq \frac{1-\lambda}{1-\log_2\sqrt{3}}\log_2m:  \left(\cons\right)^km^{1-\epsilon}\leq \ColN^k(m)\leq \left(\cons\right)^km^{1+\epsilon}\wedge\\& r_k(m)3^{\frac{k}{2}}m^{-\epsilon}<1\Biggr\}\end{align*} is \stardense\ab for all $1\geq\lambda> 0$ and $\epsilon>0$ .
		To see this, fix $\epsilon>0$ and suppose that $S^\lambda_\delta$ is \stardense\ab for all $\delta>0$ and for all $\lambda\in(0,1]$. Assume that $m\in S^\lambda_\delta$ and $$\frac{1-\lambda}{1-\log_2\sqrt{3}}\log_2m<k\leq \frac{\log_2m}{1-\log_2\sqrt{3}}.$$ Set $l_0=\left\lfloor\frac{1-\lambda}{1-\log_2\sqrt{3}}\log_2m\right\rfloor$. Since $\left(\cons\right)^{l_0}m^{1-\delta}\leq \ColN^{l_0}(m)\leq \left(\cons\right)^{l_0}m^{1+\delta}$, we obtain using Lemma \ref{bruteforce} $$\ColN^{l_0+(k-l_0)}(m)=\ColN^{k-l_0}(\ColN^{l_0}(m))\geq\frac{1}{2^{k-l_0}} \left(\cons\right)^{l_0}m^{1-\delta}=\frac{1}{3^{\frac{1}{2}(k-l_0)}} \left(\cons\right)^{k}m^{1-\delta}$$ and $$\ColN^{l_0+(k-l_0)}(m)=\ColN^{k-l_0}(\ColN^{l_0}(m))\leq 2^{k-l_0} \left(\cons\right)^{l_0}m^{1+\delta}=\left(\frac{4}{\sqrt{3}}\right)^{k-l_0} \left(\cons\right)^{k}m^{1+\delta}.$$ Thus, $$\frac{1}{3^{\frac{1}{2}(k-l_0)}} \left(\cons\right)^{k}m^{1-\delta}\leq\ColN^{k}(m)\leq\left(\frac{4}{\sqrt{3}}\right)^{k-l_0} \left(\cons\right)^{k}m^{1+\delta}.$$ We can ensure that  $\frac{1}{3^{\frac{1}{2}(k-l_0)}} m^{1-\delta}\geq m^{1-\epsilon}$ and $\left(\frac{4}{\sqrt{3}}\right)^{k-l_0} m^{1+\delta}\leq m^{1+\epsilon}$. To see that this is possible, just note that as $k\leq \frac{\log_2m}{1-\log_2\sqrt{3}}$ and $l_0\geq \frac{1-\lambda}{1-\log_2\sqrt{3}}\log_2m-1$, we obtain $$\frac{k-l_0}{\log_2m}\leq  \frac{1}{1-\log_2\sqrt{3}}- \frac{1-\lambda}{1-\log_2\sqrt{3}}+\frac{1}{\log_2m},$$ which goes to $0$ as $\lambda$ goes to $0$ and $m$ goes to $+\infty$. Thus, if $\delta,\lambda>0$ are small enough, then any sufficiently large $m\in S^{\lambda}_\delta$ fulfills $$\forall 0\leq k\leq \frac{\log_2m}{1-\log_2\sqrt{3}}:  \left(\cons\right)^km^{1-\epsilon}\leq \ColN^k(m)\leq \left(\cons\right)^km^{1+\epsilon}.$$
		Now, we will estimate $r_k(m)$ for $l_0<k\leq\frac{\log_2m}{1-\log_2\sqrt{3}}$. By Lemma \ref{longtermparity}, the set $$P_{\eta}^\lambda=\left\{m\in\positivenaturals\mid\forall 0\leq k\leq \frac{1-\lambda}{1-\log_2\sqrt{3}}\log_2m:  \frac{k}{2}-\eta\log_2m\leq\sum_{i=0}^{k-1}p(m)_i\right\}$$ is also \stardense\ab for every $\eta>0$. Assume that $m\in S^\lambda_\delta\cap P_\eta^\lambda$. Note that (using 1. of Lemma \ref{splitrkm})
		\begin{equation}\label{maintheoremthird}\begin{split}
				&r_k(m)=	r_{k-l_0+l_0}(m)
		=2^{-k+l_0}r_{l_0}(m)+\frac{1}{3^{\sum_{j=0}^{l_0-1}p(m)_j}}r_{k-l_0}(\ColN^{l_0}(m))\\
		&
		\leq 2^{l_0-k}3^{-\frac{l_0}{2}}m^{\delta}+\frac{1}{3^{\sum_{j=0}^{l_0-1}p(m)_j}}.\end{split}\end{equation} We used that $m\in S^\lambda_\delta$ and $r_l(M)\leq 1$ for all $l\in\N$ and $M\in\positivenaturals$  (by 2. of Lemma \ref{splitrkm}). Note that \begin{equation}\label{maintheoremfourth}
2^{l_0-k}3^{-\frac{l_0}{2}}m^{\delta}3^{\frac{k}{2}}=\frac{3^{\frac{k-l_0}{2}}}{2^{k-l_0}}m^\delta\leq m^\delta<\frac{1}{2}m^\epsilon\end{equation} if $\delta<\epsilon$ and $m$ is sufficiently large.
		Since $m\in P_\eta^\lambda$, we know that
		$\frac{l_0}{2}-\eta\log_2m\leq\sum_{i=0}^{l_0-1}p(m)_i.$ Hence,
	
	\begin{equation}\label{maintheoremfifth} \frac{1}{3^{\sum_{j=0}^{l_0-1}p(m)_j}}3^{\frac{k}{2}}\leq\frac{1}{3^{\frac{l_0}{2}-\eta\log_2m}}3^{\frac{k}{2}}\leq m^{\eta\log_23}3^{\frac{k-l_0}{2}}.	\end{equation}
	As before, we can choose $\lambda,\eta>0$ small enough such that $m^{\eta\log_23}3^{\frac{k-l_0}{2}}\leq\frac{1}{2}m^\epsilon$ for sufficiently large $m$. Then, combining (\ref{maintheoremthird}), (\ref{maintheoremfourth}), and (\ref{maintheoremfifth}), we obtain $$r_k(m)3^{\frac{k}{2}}\leq m^\delta+m^{\eta\log_23}3^{\frac{k-l_0}{2}} <m^\epsilon.$$
	
		Hence, we are done showing that it suffices to show that the $S^\lambda_\epsilon$ are \stardense.
		By Lemma \ref{rest} and Lemma \ref{beginning}, we know that the set 	 \begin{align*}
			&S^{\log_2\sqrt{3}}_\epsilon=\\&\left\{m\in\positivenaturals\mid\forall 0\leq k\leq\log_2m:  \left(\cons\right)^km^{1-\epsilon}\leq \ColN^k(m)\leq \left(\cons\right)^km^{1+\epsilon}\wedge r_k(m)3^{\frac{k}{2}}m^{-\epsilon}<1\right\}\end{align*} is \stardense\ab for every $\epsilon>0$.
		Suppose that $\log_2\sqrt{3}<q<1$  and we know that $S^{\lambda}_\epsilon$ is \stardense\ab for some $\lambda\in (0,1]$ and all $\epsilon>0$. We will show that $S^{q\lambda}_\epsilon$ is \stardense\ab as well for every $\epsilon>0$.
		By Lemma \ref{Iterate}, we know that for any $\zeta>0$ the set $$S_\zeta^\prime=\left\{m\in\positivenaturals\mid \ColN^{\lfloor\log_2m\rfloor}(m)\in S_\zeta^{\lambda}\right\}$$ is \stardense.
		Suppose that $\delta,\zeta>0$ and $m\in S^{\log_2\sqrt{3}}_\delta\cap S_\zeta^\prime$. Then \begin{equation}\label{maintheoremfirst}
		\left(\cons\right)^k(\ColN^{\lfloor\log_2m\rfloor}(m))^{1-\zeta} \leq \ColN^k(\ColN^{\lfloor\log_2m\rfloor}(m))\leq \left(\cons\right)^k(\ColN^{\lfloor\log_2m\rfloor}(m))^{1+\zeta}\end{equation}
		for all	$0\leq k\leq \frac{1-\lambda}{1-\log_2\sqrt{3}}\log_2(\ColN^{\lfloor\log_2m\rfloor}(m))$.\\
		Furthermore, $$\left(\cons\right)^km^{1-\delta}\leq \ColN^k(m)\leq \left(\cons\right)^km^{1+\delta}$$ for all $0\leq k\leq\log_2m$. In particular,  \begin{equation}\label{maintheoremsecond}
	\left(\cons\right)^{\lfloor\log_2m\rfloor}m^{1-\delta}\leq \ColN^{\lfloor\log_2 m\rfloor}(m)\leq \left(\cons\right)^{\lfloor\log_2m\rfloor}m^{1+\delta}.\end{equation}
		Thus, if $0\leq k\leq\frac{1-\lambda}{1-\log_2\sqrt{3}}\log_2(\ColN^{\lfloor\log_2m\rfloor}(m))$, then we have on one side,
			\begin{align*}
		& \ColN^{k+\lfloor\log_2m\rfloor}(m)=\ColN^k(\ColN^{\lfloor\log_2m\rfloor}(m))\geq \left(\cons\right)^k(\ColN^{\lfloor\log_2m\rfloor}(m))^{1-\zeta}\\
		& \geq 	
		\left(\cons\right)^k\left(\left(\cons\right)^{\lfloor\log_2m\rfloor}m^{1-\delta}  \right)^{1-\zeta}=\left(\cons\right)^{k+\lfloor\log_2m\rfloor}\left( \left(\cons\right)^{\lfloor\log_2m\rfloor}\right)^{-\zeta} m^{(1-\delta)(1-\zeta)}.
		\end{align*}
		For the first inequality, we used (\ref{maintheoremfirst}) and (\ref{maintheoremsecond}) for the second . Similarly, on the other side, we get
			\begin{align*}
			& \ColN^{k+\lfloor\log_2m\rfloor}(m)=\ColN^k(\ColN^{\lfloor\log_2m\rfloor}(m))\leq \left(\cons\right)^k\left(\ColN^{\lfloor\log_2m\rfloor}(m)\right)^{1+\zeta}\\
			& \leq 	\left(\cons\right)^k\left(\left(\cons\right)^{\lfloor\log_2m\rfloor}m^{1+\delta}\right)^{1+\zeta}= \left(\cons\right)^{k+\lfloor\log_2m\rfloor}\left( \left(\cons\right)^{\lfloor\log_2m\rfloor}\right)^{\zeta} m^{(1+\delta)(1+\zeta)}.
		\end{align*}
Given any $\epsilon>0$, we can choose small enough $0<\delta<\epsilon$ and $\zeta>0$ such that for sufficiently large $m$ we have $$ \left(\left(\cons\right)^{\lfloor\log_2m\rfloor}\right)^{\zeta} m^{(1+\delta)(1+\zeta)}\leq m^{1+\epsilon},$$ and $$\left( \left(\cons\right)^{\lfloor\log_2m\rfloor}\right)^{-\zeta} m^{(1-\delta)(1-\zeta)}\geq m^{1-\epsilon}.$$
Thus, the set	$$W=\left\{m\in\positivenaturals\mid \forall 0\leq k-\lfloor\log_2m\rfloor\leq\frac{1-\lambda}{1-\log_2\sqrt{3}}\log_2(\ColN^{\lfloor\log_2m\rfloor}(m)): \left(\cons\right)^km^{1-\epsilon}\leq \ColN^k(m)\leq \left(\cons\right)^{k}m^{1+\epsilon}\right\}$$ is \stardense.
		Now, we have
		\begin{align*}
			&\lfloor\log_2 m\rfloor+\frac{1-\lambda}{1-\log_2\sqrt{3}}\log_2(\ColN^{\lfloor\log_2m\rfloor}(m))\\
			&\geq\log_2 m-1+\frac{1-\lambda}{1-\log_2\sqrt{3}}\log_2\left(\left(\cons\right)^{\lfloor\log_2m\rfloor}m^{1-\delta}\right)\\
			&	= \log_2m+\frac{1-\lambda}{1-\log_2\sqrt{3}}\log_2\left(\left(\cons\right)^{\lfloor\log_2m\rfloor}m\right)+\frac{1-\lambda}{1-\log_2\sqrt{3}}\log_2m^{-\delta}-1 \\
			&\geq \log_2m+\frac{1-\lambda}{1-\log_2\sqrt{3}}\left(\left(\log_2\sqrt{3}\right)\log_2m-\log_2m+\log_2m\right)+\frac{1-\lambda}{1-\log_2\sqrt{3}}\log_2m^{-\delta}-1 \\
			&= (1+\frac{1-\lambda}{1-\log_2\sqrt{3}}\log_2\sqrt{3})\log_2m+\frac{1-\lambda}{1-\log_2\sqrt{3}}\log_2m^{-\delta}-1\\
			&= \frac{1-\lambda\log_2\sqrt{3}}{1-\log_2\sqrt{3}}\log_2m+\frac{1-\lambda}{1-\log_2\sqrt{3}}\log_2m^{-\delta}-1.\\
		\end{align*}
	For the first inequality, we used that $\lfloor \log_2m\rfloor\geq \log_2m-1$ and (\ref{maintheoremsecond}), and for the second inequality, we used that $\left(\frac{\sqrt{3}}{2}\right)^{\lfloor\log_2m\rfloor}\geq \left(\frac{\sqrt{3}}{2}\right)^{\log_2m}$.
	Since $\log_2\sqrt{3}<q<1$, we conclude that $$\frac{1-\lambda\log_2\sqrt{3}}{1-\log_2\sqrt{3}}\log_2m+\frac{1-\lambda}{1-\log_2\sqrt{3}}\log_2m^{-\delta}-1\geq \frac{1-q\lambda}{1-\log_2\sqrt{3}}\log_2m$$ if $\delta>0$ is small enough and $m$ is sufficiently large. Thus, we have shown that the set
	$$V=\left\{m\in\positivenaturals\mid \lfloor\log_2 m\rfloor+\frac{1-\lambda}{1-\log_2\sqrt{3}}\log_2(\ColN^{\lfloor\log_2m\rfloor}(m))\geq \frac{1-q\lambda}{1-\log_2\sqrt{3}}\log_2m\right\} $$
	is \stardense. Thus, if $m$ is additionally in $W\cap V$, then 
	$$ \left(\cons\right)^km^{1-\epsilon}\leq \ColN^k(m)\leq \left(\cons\right)^km^{1+\epsilon}$$ for all $\lfloor\log_2m\rfloor\leq k\leq \frac{1-q\lambda}{1-\log_2\sqrt{3}}\log_2m$, and as $\delta<\epsilon$ and $m\in  S^{\log_2\sqrt{3}}_\delta$, these inequalities also hold for $0\leq k\leq \lfloor\log_2m\rfloor$.
	
	It remains to estimate $r_k(m)$.  By Lemma \ref{paritystar}, the set $$U_\delta=\left\{m\in\positivenaturals\mid \left(\frac{1}{2}-\delta\right)\lfloor\log_2m\rfloor\leq\sum_{k=0}^{\lfloor\log_2m\rfloor-1}p(m)_k\right\}$$ is \stardense\ab for any $\delta>0$. Assume that $m\in V\cap S^{\log_2\sqrt{3}}_\delta\cap S_\zeta^\prime\cap U_\delta$. Since $\delta<\epsilon$, we obtain
	$r_k(m)3^{\frac{k}{2}}m^{-\epsilon}<r_k(m)3^{\frac{k}{2}}m^{-\delta}<1$ for all $0\leq k\leq \log_2m$. Set $k_0=\lfloor\log_2 m\rfloor$. Since $m\in S_\zeta^\prime\cap S^{\log_2\sqrt{3}}_\delta\cap V$, we have 
	\begin{equation}\label{rkmestimateiteration}
	 r_k(\ColN^{k_0}(m))3^{\frac{k}{2}}<(\ColN^{k_0}(m))^{\zeta}\leq \left(\left(\cons\right)^{k_0}m^{1+\delta}\right)^\zeta	\end{equation}
	for all $0\leq k\leq \frac{1-q\lambda}{1-\log_2\sqrt{3}}\log_2m-k_0$.
	Furthermore, with 1. of Lemma \ref{splitrkm} for $k_0\leq k+k_0\leq \frac{1-q\lambda}{1-\log_2\sqrt{3}}\log_2m$, we get
		\begin{equation}\label{maintheoremeigth}
	r_{k+k_0}(m)
		=2^{-k}r_{k_0}(m)+\frac{1}{3^{\sum_{j=0}^{k_0-1}p(m)_j}}r_{k}(\ColN^{k_0}(m)).\end{equation}
		Now, since $m\in  S^{\log_2\sqrt{3}}_\delta$, we have
		\begin{equation}\label{maintheoremsixth}	2^{-k}r_{k_0}(m)3^{\frac{k+k_0}{2}}=\frac{3^{\frac{k}{2}}}{2^{k}}r_{k_0}(m)3^{\frac{k_0}{2}}\leq m^\delta.\end{equation} Furthermore, since $m\in U_\delta$ and using (\ref{rkmestimateiteration}), we get
		\begin{equation}\label{maintheoremseventh}\begin{split}
			&\frac{1}{3^{\sum_{j=0}^{k_0-1}p(m)_j}}r_{k}(\ColN^{k_0}(m))3^{\frac{k+k_0}{2}}\leq \frac{1}{3^{\frac{k_0}{2}-\delta\log_2m-\delta}}3^{\frac{k_0}{2}}r_{k}(\ColN^{k_0}(m))3^{\frac{k}{2}}\\&\leq \frac{3^\delta}{3^{-\delta\log_2m}}\left(\left(\cons\right)^{\lfloor\log_2m\rfloor}m^{1+\delta}\right)^\zeta\leq 3^{\delta}m^{(1+\delta)\zeta+\delta\log_23}.\end{split}\end{equation}
		For the last inequality, we used $\frac{\sqrt{3}}{2}<1$.
		If we choose $\zeta,\delta>0$ small enough and combining (\ref{maintheoremeigth}), (\ref{maintheoremsixth}, and (\ref{maintheoremseventh}) we can ensure that $$r_{k+k_0}(m)3^{\frac{k+k_0}{2}}\leq m^\delta+3^\delta m^{(1+\delta)\zeta+\delta\log_23}< m^\epsilon
		$$ if $m$ is large enough.

	Thus, we have shown that $S^{q\lambda}_{\epsilon}$ is \stardense. It follows inductively that $S^{q^n}_{\epsilon}$ is \stardense\ab for every $n\in\N$. Thus, given any $0<\lambda\leq 1$, we can find $n\in\N$ such that $q^n<\lambda$, thus, $S^{\lambda}_\epsilon$ is \stardense\ab as well since $S^{q^n}_\epsilon\subseteq S^{\lambda}_\epsilon$. Hence, the proof is complete.
	\end{proof}
\section{Bounds for the Stopping Time and Further Applications}
	In the following, we will mention a few consequences of Theorem \ref{secondmaintheorem}.
	First, there is the following reformulation of Theorem \ref{secondmaintheorem}, which is the version of Theorem \ref{reformnatural} for \stardensity\ab instead of natural density.
		\begin{thm}\label{reform}
		Suppose that $\epsilon>0$. Then the set $$\left\{m\in\positivenaturals\mid\forall \lambda\in[0,1]:  m^{\lambda-\epsilon}\leq \ColN^{\left\lfloor\frac{1-\lambda}{1-\log_2\sqrt{3}}\log_2m\right\rfloor}(m)\leq m^{\lambda+\epsilon}\right\}$$ is \stardense.
	\end{thm}
	\begin{proof}
		Suppose that $\epsilon,\delta>0$ and set $$A_\delta=\left\{m\in\positivenaturals\mid\forall 0\leq k\leq \frac{\log_2m}{1-\log_2\sqrt{3}}:  \left(\cons\right)^km^{1-\delta}\leq \ColN^k(m)\leq \left(\cons\right)^km^{1+\delta}\right\}.$$ By Theorem \ref{secondmaintheorem}, $A_\delta$ is \stardense. Suppose that $m\in A_\delta$ and $\lambda\in[0,1]$. Then $$0\leq \left\lfloor\frac{1-\lambda}{1-\log_2\sqrt{3}}\log_2m\right\rfloor\leq \frac{\log_2m}{1-\log_2\sqrt{3}},$$ thus, $$\left(\cons\right)^{\left\lfloor\frac{1-\lambda}{1-\log_2\sqrt{3}}\log_2m\right\rfloor}m^{1-\delta}\leq \ColN^{\left\lfloor\frac{1-\lambda}{1-\log_2\sqrt{3}}\log_2m\right\rfloor}(m)\leq \left(\cons\right)^{\left\lfloor\frac{1-\lambda}{1-\log_2\sqrt{3}}\log_2m\right\rfloor}m^{1+\delta}.$$
		We have on one side, $$\left(\cons\right)^{\left\lfloor\frac{1-\lambda}{1-\log_2\sqrt{3}}\log_2m\right\rfloor}m^{1+\delta}\leq\frac{2}{\sqrt{3}} \left(\cons\right)^{\frac{1-\lambda}{1-\log_2\sqrt{3}}\log_2m}m^{1+\delta}=\frac{2}{\sqrt{3}}m^{\lambda-1}m^{1+\delta}=\frac{2}{\sqrt{3}}m^{\lambda+\delta},$$
		and on the other side, $$\left(\cons\right)^{\left\lfloor\frac{1-\lambda}{1-\log_2\sqrt{3}}\log_2m\right\rfloor}m^{1-\delta}\geq \left(\cons\right)^{\frac{1-\lambda}{1-\log_2\sqrt{3}}\log_2m}m^{1-\delta}=m^{\lambda-1}m^{1-\delta}=m^{\lambda-\delta}.$$
		If $m$ is large enough and $\delta<\epsilon$, then we conclude that $$m^{\lambda-\epsilon}\leq m^{\lambda-\delta}\leq \ColN^{\left\lfloor\frac{1-\lambda}{1-\log_2\sqrt{3}}\log_2m\right\rfloor}(m)\leq \frac{2}{\sqrt{3}}m^{\lambda+\delta}\leq m^{\lambda+\epsilon},$$ therefore, the claim follows since $A_\delta$ is \stardense.
	\end{proof}
From Theorem \ref{reform} we get the slightly stronger \stardensity\ab version of Corollary \ref{upperbound}.

\begin{cor}
	Suppose that $\epsilon>0$. Then the set $$\left\{m\in\positivenaturals\mid \ColN^{\left\lfloor\frac{\log_2m}{1-\log_2\sqrt{3}}\right\rfloor}(m)\leq m^{\epsilon}\right\}$$  is \stardense.
\end{cor} 
As an immediate consequence of Theorem \ref{secondmaintheorem} and Lemma \ref{longtermparity}, we obtain the \stardensity\ab version of Theorem \ref{oddandeven}.
\begin{thm}\label{paritylongterm}
	Let $\epsilon>0$. Then the set  $$\left\{m\in\positivenaturals \mid\forall 0\leq k\leq \frac{\log_2m}{1-\log_2\sqrt{3}}:  -\epsilon\log_2m\leq\sum_{i=0}^{k-1}p(m)_i-\frac{k}{2}\leq \epsilon\log_2m\right\}$$ is \stardense.
\end{thm}
	Recall that for $m\in\positivenaturals$, we defined the total stopping time $\tau(m)$ to be the minimal $n\in\N$ such that $\ColN^n(m)=1$ if such an $n$ exists and $=\infty$ otherwise. First, we note the following:
	\begin{thm}\label{lowerboundstar}
		For any $0\leq\alpha<\frac{1}{1-\log_2\sqrt{3}}$ the set $\left\{m\in\positivenaturals \mid \tau(m)>\alpha\log_2(m)\right\}$ is \stardense.
	\end{thm}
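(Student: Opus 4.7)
The plan is to deduce this directly from the lower half of Theorem \ref{secondmaintheorem}. The intuition: if the trajectory stays close to the ideal geometric decay $(\frac{3^{1/2}}{2})^k m$, then it cannot reach $1$ any earlier than roughly $\frac{1}{1-\frac{\log_23}{2}}\log_2 m$ steps, so for any smaller multiple $\alpha\log_2 m$ of $\log_2 m$ the value $\ColN^k(m)$ is still strictly above $1$.

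Given $0 \leq \alpha < \frac{1}{1-\frac{\log_23}{2}}$, I first choose $\epsilon>0$ small enough that $\alpha \bigl(1-\tfrac{\log_23}{2}\bigr) < 1-\epsilon$; such an $\epsilon$ exists because the strict inequality $\alpha(1-\tfrac{\log_23}{2}) < 1$ holds by assumption. Let $A_\epsilon$ be the $*$-dense set provided by Theorem \ref{secondmaintheorem} for this $\epsilon$; so for every $m \in A_\epsilon$ and every $0 \leq k \leq \frac{1}{1-\frac{\log_23}{2}}\log_2 m$ we have
\[
\ColN^k(m) \;\geq\; \Bigl(\tfrac{3^{1/2}}{2}\Bigr)^{k} m^{1-\epsilon}.
\]

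Next, for $m \in A_\epsilon$ and any $k$ with $0 \leq k \leq \alpha \log_2 m$, I compute
\[
\log_2\!\Bigl(\bigl(\tfrac{3^{1/2}}{2}\bigr)^{k} m^{1-\epsilon}\Bigr)
= (1-\epsilon)\log_2 m - k\bigl(1-\tfrac{\log_23}{2}\bigr)
\geq \bigl((1-\epsilon) - \alpha(1-\tfrac{\log_23}{2})\bigr)\log_2 m,
\]
and by the choice of $\epsilon$ the coefficient on the right-hand side is a strictly positive constant. Hence for $m \in A_\epsilon$ with $m$ larger than some absolute threshold $m_0$, the quantity $(\tfrac{3^{1/2}}{2})^{k} m^{1-\epsilon}$ exceeds $1$ for every $0 \leq k \leq \alpha\log_2 m$, so $\ColN^k(m) > 1$, and since $\ColN^k(m)$ is a positive integer this means $\ColN^k(m)\neq 1$. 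Therefore $\tau(m) > \alpha\log_2 m$.

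Finally, to conclude, note that $\alpha\log_2 m \leq \frac{1}{1-\frac{\log_23}{2}}\log_2 m$, so the range of $k$ we used is indeed covered by Theorem \ref{secondmaintheorem}. The set $\{m \in A_\epsilon \mid m \geq m_0\}$ is $*$-dense (removing finitely many elements from a $*$-dense set preserves $*$-density, by item (5) of the gathered properties) and is contained in $\{m \in \N+1 \mid \tau(m) > \alpha \log_2 m\}$, which is therefore $*$-dense as well. The main conceptual step is the choice of $\epsilon$; no new obstacle arises, since the work of Theorem \ref{secondmaintheorem} already guarantees the crucial lower bound on $\ColN^k(m)$.
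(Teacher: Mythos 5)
Your proof is correct and takes essentially the same route as the paper: both deduce the statement from the lower bound of Theorem \ref{secondmaintheorem}, the only difference being that the paper passes through its reformulation (Theorem \ref{reform}) and checks the single time $\lfloor\alpha\log_2m\rfloor$, whereas you apply the bound to every $0\leq k\leq\alpha\log_2m$, which if anything is slightly cleaner since it rules out $\ColN^k(m)=1$ at all relevant times rather than only at one. No gap.
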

	\begin{proof}
		Since $0\leq\alpha<\frac{1}{1-\log_2\sqrt{3}}$, there is a $\mu\in(0,1]$ such that $\alpha=\frac{1-\mu}{1-\log_2\sqrt{3}}$. Let $\epsilon>0$. By Theorem \ref{reform}, we know that  $$\left\{m\in\positivenaturals\mid\forall \lambda\in[0,1]:  m^{\lambda-\epsilon}\leq \ColN^{\left\lfloor\frac{1-\lambda}{1-\log_2\sqrt{3}}\log_2m\right\rfloor}(m)\leq m^{\lambda+\epsilon}\right\}$$ is \stardense, in particular, $$S=\left\{m\in\positivenaturals\mid m^{\mu-\epsilon}\leq \ColN^{\left\lfloor\frac{1-\mu}{1-\log_2\sqrt{3}}\log_2m\right\rfloor}(m)\right\}$$ is \stardense. If $0<\epsilon<\mu$, then $m^{\mu-\epsilon}>1$, hence, $\tau(m)> \left\lfloor\frac{1-\mu}{1-\log_2\sqrt{3}}\log_2m\right\rfloor=\lfloor\alpha\log_2m\rfloor$ for every $m\in S$.
	\end{proof}
Now, we can prove \ref{averagelowerbound}, ie., that there exists a lower bound for the average total stopping time, coinciding with the one conjectured by Crandall and Shanks (see \cite[page 13]{Lagariasgeneralizations}), i.e., 	$$\liminf_{x\rightarrow\infty}\frac{1}{x\log_2x}\sum_{k=1}^{\lfloor x\rfloor}\tau(k)\geq \frac{1}{1-\log_2\sqrt{3}}.$$
\begin{proof}[Proof of Theorem \ref{averagelowerbound}]
	It is enough to show that $$\liminf_{m\rightarrow\infty}\frac{1}{m\lfloor\log_2m\rfloor}\sum_{k=1}^{m}\tau(k)\geq \frac{1}{1-\log_2\sqrt{3}},$$ as $\lim_{x\rightarrow\infty}\frac{x\log_2x}{\lfloor x\rfloor\lfloor\log_2\lfloor x\rfloor\rfloor}=1$.
	Let $\delta>0$. By Theorem \ref{lowerboundstar} and the fact that $\lfloor\log_2m\rfloor\leq\log_2m$, we know that the set
	$$ \left\{m\in\positivenaturals\mid \tau(m)\geq \frac{1-\delta}{1-\log_2\sqrt{3}}\lfloor\log_2 m\rfloor\right\}$$
	is\stardense. By (the converse direction in) Lemma \ref{partitiondenseN}, we find $C>0$ and $0<D<1$ such that $$\mu_{\intervalN{2^n}{2^{n+1}}}\left(\left\{m\in\intervalN{2^n}{2^{n+1}}\mid \tau(m)\geq \frac{(1-\delta)n}{1-\log_2\sqrt{3}}\right\}\right)\geq 1-\frac{C}{2^{nD}}.$$ Suppose that $m\in\positivenaturals$. Put $n=\left\lfloor\log_2m\right\rfloor$. Then \begin{equation}\label{estimationtauk}
	\begin{split}
		&\sum_{k=1}^m\tau(k)=\sum_{i=0}^{n-1}\sum_{k=2^i}^{2^{i+1}-1}\tau(k)+\sum_{k=2^n}^m\tau(k)\\&\geq \sum_{i=0}^{n-1}\frac{(1-\delta)i}{1-\log_2\sqrt{3}}2^i\left(1-\frac{C}{2^{iD}}\right)+\frac{(1-\delta)n}{1-\log_2\sqrt{3}}\left(m-2^n-2^n\frac{C}{2^{nD}}\right).\end{split}\end{equation}
	Note that for any $x\in\R\setminus\left\{1\right\}$ and $n\in\N$ we have \begin{equation}\label{x}
		\sum_{k=1}^nkx^{k}=x\frac{(x-1)(n+1)x^n+1-x^{n+1}}{(1-x)^2},	\end{equation} in particular, for $x=2$ $$\sum_{k=1}^{n}k2^k=(n-1)2^{n+1}+2$$ for all $n\in\N$. Thus, \begin{align*}
		&\sum_{i=0}^{n-1}\frac{(1-\delta)i}{1-\log_2\sqrt{3}}2^i+\frac{(1-\delta)n}{1-\log_2\sqrt{3}}(m-2^n)=\frac{1-\delta}{1-\log_2\sqrt{3}}((n-2)2^{n}+2+n(m-2^n))\\&=\frac{1-\delta}{1-\log_2\sqrt{3}}(-2\cdot 2^{n}+2+nm).\end{align*} Thus, we have \begin{equation}\label{firstpart}
		\frac{1}{nm}\left(\sum_{i=0}^{n-1}\frac{(1-\delta)i}{1-\log_2\sqrt{3}}2^i+\frac{(1-\delta)n}{1-\log_2\sqrt{3}}(m-2^n)\right)=\frac{1}{nm}\frac{1-\delta}{1-\log_2\sqrt{3}}(-2\cdot 2^{n}+2+nm).\end{equation}
		Furthermore, equation (\ref{x}) with $x=2^{1-D}$ gives
	$$\sum_{i=1}^{n}i2^i\frac{1}{2^{iD}}=\sum_{i=1}^{n}i2^{(1-D)i}=2^{1-D}\frac{(2^{1-D}-1)(n+1)2^{(1-D)n}+1-2^{(1-D)(n+1)}}{(1-2^{1-D})^2}.$$ Thus, 	\begin{equation}\label{secondpart}
		\frac{1}{n2^n}\sum_{i=1}^{n}i2^i\frac{1}{2^{iD}}=2^{1-D}\frac{(2^{1-D}-1)(1+\frac{1}{n})2^{-Dn}+\frac{1}{n2^n}-\frac{1}{n}2^{1-D}2^{-Dn}}{(1-2^{1-D})^2}.	\end{equation}
	Since $D>0$, we have \begin{equation}\label{thirdpart}\lim_{n\rightarrow\infty}2^{(1-D)}\frac{(2^{(1-D)}-1)(1+\frac{1}{n})2^{-Dn}+\frac{1}{n2^n}-\frac{1}{n}2^{1-D}2^{-Dn}}{(1-2^{(1-D)})^2}=0.	\end{equation} Combining equations (\ref{firstpart}) and (\ref{secondpart}), estimate (\ref{estimationtauk}), (\ref{thirdpart}), and $2^n\leq m$, we obtain for arbitrary $\epsilon>0$ that
	$$\frac{1}{mn}\sum_{k=1}^m\tau(k)\geq \frac{1-\delta}{1-\log_2\sqrt{3}}-\epsilon$$ in case $m$ is large enough. Thus, 
	$$\liminf_{m\rightarrow\infty}\frac{1}{m\lfloor\log_2m\rfloor}\sum_{k=1}^{m}\tau(k)\geq \frac{1-\delta}{1-\log_2\sqrt{3}}.$$
Since $\delta>0$ was arbitrary, the claim follows.
\end{proof}
Next, we show that if the set of $m$ for which $\frac{\tau(m)}{\log_2m}$ is bounded by some $C>0$ is \stardense, then this is true for any bound $C>\frac{1}{1-\log_2\sqrt{3}}$.
	\begin{cor}\label{possibleupperbound}
		Suppose that there exists $C>0$ such that the set $$\left\{m\in\positivenaturals\mid \tau(m)\leq C\log_2m\right\}$$ is \stardense. Then for every $C^\prime>\frac{1}{1-\log_2\sqrt{3}}$ the set $$\left\{m\in\positivenaturals\mid \tau(m)\leq C^\prime\log_2m\right\}$$ is also \stardense.
	\end{cor}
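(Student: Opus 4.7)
The plan is to iterate Lemma \ref{Iterate} many times, using Theorem \ref{secondmaintheorem} to control the intermediate iterates, and thereby drive the hypothesized constant $C$ down to any value strictly greater than $\frac{1}{1-\frac{\log_23}{2}}$. The governing heuristic is this: if $\tau(n)\leq C\log_2 n$ on a \stardense\ab set, then applying $\ColN$ for $\lfloor\log_2 n\rfloor$ steps reduces the iterate to approximately $n^{\log_23/2}$, giving $\tau(n)\leq \log_2 n + C(\log_23/2)\log_2 n$ on a (different) \stardense\ab set. The resulting self-map $C\mapsto 1+(\log_23/2)C$ is a contraction with unique fixed point $\frac{1}{1-\log_23/2}$, so iterating it drives any starting value down arbitrarily close to this fixed point.

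To execute the plan, fix $C'>\frac{1}{1-\log_23/2}$ and choose $\epsilon>0$ small and $k\in\N$ large enough that $\frac{1-(\log_23/2+\epsilon)^k}{1-\log_23/2-\epsilon}+C(\log_23/2+\epsilon)^k<C'$; this is possible because $\log_23/2+\epsilon<1$ for small $\epsilon$, so the first fraction tends to $\frac{1}{1-\log_23/2-\epsilon}<C'$ while the second term vanishes as $k\to\infty$. Let $P=\{m\mid \tau(m)\leq C\log_2 m\}$, \stardense\ab by hypothesis. Define inductively $P_0=P$ and $P_{i+1}=\{m\mid \ColN^{\lfloor\log_2 m\rfloor}(m)\in P_i\}$; by $k$-fold application of Lemma \ref{Iterate}, $P_k$ is \stardense. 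In parallel, letting $A$ be the \stardense\ab set of Theorem \ref{secondmaintheorem} for parameter $\epsilon$, set $A_0=A$ and $A_{i+1}=A\cap\{m\mid \ColN^{\lfloor\log_2 m\rfloor}(m)\in A_i\}$, each \stardense. For $m$ in the \stardense\ab intersection $A_{k-1}\cap P_k$, define $m_0=m$ and $m_{i+1}=\ColN^{\lfloor\log_2 m_i\rfloor}(m_i)$. Since each $m_i$ for $i<k$ lies in $A$, Theorem \ref{secondmaintheorem} inductively yields $\log_2 m_i\leq (\log_23/2+\epsilon)^i\log_2 m+O(i)$, while $m_k\in P$ gives $\tau(m_k)\leq C\log_2 m_k$. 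Summing the geometric series bounds $\tau(m)\leq \sum_{i=0}^{k-1}\lfloor\log_2 m_i\rfloor + \tau(m_k)<C'\log_2 m$ for sufficiently large $m$, and the claim follows since \stardensity\ab is preserved under removing finitely many elements.

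The main obstacle is propagating the approximation of Theorem \ref{secondmaintheorem} along the iterates $m_i$: a single appeal to the theorem controls only $m_1$ in terms of $m_0$, with no guarantee that the subsequent $m_i$ also lie in the good set $A$. This is what forces the parallel iteration of $A$, yielding the nested sets $A_i$ whose \stardensity\ab follows from repeated application of Lemma \ref{Iterate}. Once all of $m_0,\dots,m_{k-1}$ are controlled simultaneously, the remainder is a routine geometric-sum estimate, and the contraction ratio $\log_23/2<1$ ensures convergence to the optimal bound regardless of how large the hypothesized constant $C$ happened to be.
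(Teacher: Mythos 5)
Your proposal is correct and takes essentially the same route as the paper: both rest on combining Lemma \ref{Iterate} with Theorem \ref{secondmaintheorem} to realize the contraction $C\mapsto 1+\frac{\log_23}{2}C+O(\epsilon)$ and iterating it toward the fixed point $\frac{1}{1-\frac{\log_23}{2}}$. The only difference is organizational: the paper proves the one-step implication once and iterates it at the level of the statement, while you unroll a fixed number $k$ of steps inside a single \stardense\ intersection (the nested sets $A_i$, $P_i$), which adds some bookkeeping but gives the same bound.
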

\begin{proof}
	By Lemma \ref{Iterate} and Lemma \ref{beginning}, we know that the set  $$S=\left\{m\in\positivenaturals\mid \tau\left(\ColN^{\lfloor\log_2m\rfloor}(m)\right)\leq C\log_2\left(\ColN^{\lfloor\log_2m\rfloor}(m)\right) \wedge \ColN^{\lfloor\log_2m\rfloor}(m)\leq\left(\cons\right)^{\log_2m}m^{1+\epsilon}\right\}$$ is \stardense\ab for every $\epsilon>0$.
	If $m\in S$, then as $\tau(m)\geq \lfloor\log_2m\rfloor$ for all $m\in\positivenaturals$, we get \begin{equation}\label{taufirst}
		\tau(m)=\lfloor\log_2m\rfloor+\tau\left(\ColN^{\lfloor\log_2m\rfloor}(m)\right)\leq \lfloor\log_2m\rfloor+C\log_2\left(\ColN^{\lfloor\log_2m\rfloor}(m)\right)	\end{equation}
	and 
	\begin{equation}\label{tausecond}
		\log_2\left(\ColN^{\lfloor\log_2m\rfloor}(m)\right)\leq \log_2\left(\left(\cons\right)^{\log_2m}m^{1+\epsilon}\right)= (\log_2\sqrt{3})\log_2m+\epsilon\log_2m.	\end{equation}
	 Combining (\ref{taufirst}) and (\ref{tausecond}), we obtain that 
	 $$\left\{m\in\positivenaturals\mid \tau(m)\leq(1+ C\log_2\sqrt{3})\log_2m+C\epsilon\log_2m\right\}$$ is \stardense\ab for every $\epsilon>0$. Hence, we have shown that if for some $C>0$ 
	  $$\left\{m\in\positivenaturals\mid \tau(m)\leq C\log_2m\right\}$$ 
	 is \stardense, then also
	  $$\left\{m\in\positivenaturals\mid \tau(m)\leq D\log_2m\right\}$$ 
	 is \stardense\ab for  every $$D>1+C\log_2\sqrt{3}.$$ Iterating this, we get that for every $n\in\N$ and for every \begin{align*}
	 	D>C\left(\log_2\sqrt{3}\right)^{n+1}+\sum_{k=0}^n\left(\log_2\sqrt{3}\right)^k=C\left(\log_2\sqrt{3}\right)^{n+1}+\frac{1-\left(\log_2\sqrt{3}\right)^{n+1}}{1-\log_2\sqrt{3}}\end{align*}
	 the set 
	 $$\left\{m\in\positivenaturals\mid \tau(m)\leq D\log_2m\right\}$$ 
	 is \stardense. Hence, the claim follows as $\left(\log_2\sqrt{3}\right)^{n+1}\rightarrow 0$ for $n\rightarrow\infty$.
\end{proof}
In order to conclude that $$\lim_{x\rightarrow\infty}\frac{1}{x\log_2x}\sum_{k=1}^{\lfloor x\rfloor}\tau(k)= \frac{1}{1-\log_2\sqrt{3}},$$ a slightly weaker assumption than $\tau(m)$ being in $O(\log_2m)$ is sufficient. It is enough to assume that $\tau(m)\leq C\log_2m$ for a \stardense\ab set for some $C>0$ and to have $\tau(m)$ in $O(m^\alpha)$ for all $\alpha>0$. For example, this last condition is fulfilled if $\tau(m)$ is in $O((\log_2m)^K)$ for some $K\in\positivenaturals$.
\begin{cor}
		Suppose that there exists $C>0$ such that the set $$\left\{m\in\positivenaturals\mid \tau(m)\leq C\log_2m\right\}$$ is \stardense. Furthermore, suppose that there exist $K_\alpha\in(0,\infty)$ for all $\alpha>0$ such that $\tau(m)\leq K_\alpha m^\alpha$ for all $m\in\positivenaturals$. Then $$\lim_{x\rightarrow\infty}\frac{1}{x\log_2x}\sum_{k=1}^{\lfloor x\rfloor}\tau(k)= \frac{1}{1-\log_2\sqrt{3}}.$$
\end{cor}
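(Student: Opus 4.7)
The plan is to combine the lower bound already established in Theorem \ref{averagelowerbound} with a matching upper bound, so only $\limsup_{x\rightarrow\infty}\frac{1}{x\log_2x}\sum_{k=1}^{\lfloor x\rfloor}\tau(k)\leq\frac{1}{1-\frac{\log_23}{2}}$ remains to be shown. The two hypotheses fit together naturally: Corollary \ref{possibleupperbound} upgrades the single constant $C$ on a \stardense\ab set to any constant above the conjectured sharp value, while the global polynomial bound $\tau(m)\leq K_\alpha m^\alpha$ controls the exceptional set.

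Fix $\epsilon>0$ and set $C^\prime=\frac{1}{1-\frac{\log_23}{2}}+\epsilon$. By the first hypothesis together with Corollary \ref{possibleupperbound}, the set $S_{C^\prime}=\{m\in\N+1\mid\tau(m)\leq C^\prime\log_2 m\}$ is \stardense, so there exist $C_0>0$ and $0<D<1$ with $\#\{m\in\intervalA{1}{N}\mid m\notin S_{C^\prime}\}\leq C_0 N^{1-D}$ for all $N\in\N+1$. I would then split
$$\sum_{k=1}^{\lfloor x\rfloor}\tau(k)=\sum_{k\in S_{C^\prime}\cap\intervalA{1}{\lfloor x\rfloor}}\tau(k)+\sum_{k\in\intervalA{1}{\lfloor x\rfloor}\setminus S_{C^\prime}}\tau(k).$$
The first sum is at most $\lfloor x\rfloor C^\prime\log_2 x$. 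For the second, pick any $\alpha$ with $0<\alpha<D$ and apply $\tau(k)\leq K_\alpha k^\alpha\leq K_\alpha x^\alpha$; then this sum is bounded by $C_0 K_\alpha x^{1-D+\alpha}$, which is $o(x\log_2 x)$ since $1-D+\alpha<1$. Dividing by $x\log_2 x$, letting $x\rightarrow\infty$, and then $\epsilon\downarrow 0$ gives the desired upper bound.

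There is no real obstacle beyond this: the Collatz-specific heavy lifting is already packaged in Corollary \ref{possibleupperbound}, and the role of the second hypothesis is purely accounting — to ensure that the at most $C_0 N^{1-D}$ exceptional integers, even if they carry polynomially large stopping times of size $x^\alpha$, contribute only $o(x\log x)$ to the total sum. The only minor point to keep straight is that $D$ is produced by the \stardensity\ab of $S_{C^\prime}$ and must be fixed before $\alpha$ is chosen, after which the inequality $\alpha<D$ is always achievable.
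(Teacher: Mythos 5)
Your proof is correct and follows essentially the same route as the paper: the lower bound comes from Theorem \ref{averagelowerbound}, Corollary \ref{possibleupperbound} upgrades the hypothesis to $\tau(m)\leq C^\prime\log_2 m$ on a \stardense\ set for any $C^\prime>\frac{1}{1-\frac{\log_23}{2}}$, and the global bound $\tau(k)\leq K_\alpha k^\alpha$ with $\alpha<D$ makes the exceptional set's contribution $o(x\log_2x)$. The only difference is bookkeeping: the paper splits the sum over dyadic blocks $\intervalN{2^i}{2^{i+1}}$ via Lemma \ref{partitiondenseN} and evaluates the resulting geometric sums, while you bound the good terms uniformly by $C^\prime\log_2x$ on $\intervalA{1}{\lfloor x\rfloor}$ directly, which is simpler and loses nothing for the upper bound.
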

\begin{proof}
	We already know that $$\liminf_{m\rightarrow\infty}\frac{1}{m\lfloor\log_2m\rfloor}\sum_{k=1}^{m}\tau(k)\geq \frac{1}{1-\log_2\sqrt{3}}$$ by Theorem \ref{averagelowerbound}. Thus, it is enough to show that $$\limsup_{m\rightarrow\infty}\frac{1}{m\lfloor\log_2m\rfloor}\sum_{k=1}^{m}\tau(k)\leq \frac{1}{1-\log_2\sqrt{3}}.$$ Towards this end, fix $\delta>0$. We are going to proceed as in the proof of Theorem \ref{averagelowerbound}. By Corollary \ref{possibleupperbound} and the fact that $$\lim_{m\rightarrow\infty}\frac{[\log_2m]}{\log_2m}=1,$$ we know that the set 
	 $$\left\{m\in\positivenaturals\mid \tau(m)\leq \frac{1+\delta}{1-\log_2\sqrt{3}}\lfloor\log_2m\rfloor\right\}$$ is \stardense. By Lemma \ref{partitiondenseN}, we find $C>0$ and $0<D<1$ such that \begin{equation}\label{estimatelimsup}
	 \mu_{\intervalN{2^n}{2^{n+1}}}\left(\left\{m\in\intervalN{2^n}{2^{n+1}}\mid \tau(m)\leq \frac{(1+\delta)n}{1-\log_2\sqrt{3}} \right\}\right)\geq 1-\frac{C}{2^{nD}}.\end{equation} Choose $0<\alpha<D$ and $K_\alpha>0$ such that $\tau(m)\leq K_\alpha 2^{\alpha\lfloor\log_2m\rfloor}$ for all $m\in\positivenaturals$. Fix $m\in\positivenaturals$ and put $n=\lfloor \log_2 m\rfloor$. Using (\ref{estimatelimsup}) and $\tau(k)\leq K_\alpha 2^{\alpha\lfloor\log_2k\rfloor}$ we get \begin{equation}\label{secontauestimation}
	 	\begin{split}
	 &\sum_{k=1}^m\tau(k)=\sum_{i=0}^{n-1}\sum_{k=2^i}^{2^{i+1}-1}\tau(k)+\sum_{k=2^n}^m\tau(k)\\&\leq \sum_{i=0}^{n-1}\frac{(1+\delta)i}{1-\log_2\sqrt{3}}2^i+\frac{(1+\delta)n}{1-\log_2\sqrt{3}}(m-2^n)+\sum_{i=0}^{n}K_\alpha 2^{i\alpha}C\cdot 2^{i(1-D)}.\end{split}\end{equation} As in the proof of Theorem \ref{averagelowerbound}, we obtain \begin{equation}\label{secondtaufirst}
\sum_{i=0}^{n-1}\frac{(1+\delta)i}{1-\log_2\sqrt{3}}2^i+\frac{(1+\delta)n}{1-\log_2\sqrt{3}}(m-2^n)=\frac{1+\delta}{1-\log_2\sqrt{3}}(-2\cdot 2^{n}+2+nm).\end{equation} It remains to estimate $\sum_{i=0}^{n}K_\alpha 2^{i\alpha}C\cdot 2^{i(1-D)}$. We have \begin{equation}\label{secondtausecond}
\sum_{i=0}^{n}K_\alpha 2^{i\alpha}C\cdot 2^{i(1-D)}=CK_\alpha\sum_{i=0}^{n} 2^{i(1-D+\alpha)}=CK_\alpha\frac{2^{(n+1)(1-D+\alpha)}-1}{2^{(1-D+\alpha)}-1}.\end{equation} Thus,
	\begin{equation}\label{resttauzero}
	\frac{1}{2^n}\sum_{i=0}^{n}K_\alpha 2^{i\alpha}C\cdot 2^{i(1-D)}=CK_\alpha\frac{2^{1-D+\alpha}2^{n(\alpha-D)}-2^{-n}}{2^{(1-D+\alpha)}-1}\rightarrow 0\end{equation} for $n\rightarrow\infty$ as $\alpha<D$. Thus, combining (\ref{secontauestimation}), (\ref{secondtaufirst}), (\ref{secondtausecond}), and (\ref{resttauzero}) we have for every $\delta>0$ $$\limsup_{m\rightarrow\infty}\frac{1}{m\lfloor\log_2 m\rfloor}\sum_{k=1}^m\tau(k)\leq \frac{1+\delta}{1-\log_2\sqrt{3}}$$ and the claim follows. 
\end{proof}
We now prove the \stardensity\ab version of Theorem \ref{thirdmaintheoremCol} using Theorem \ref{secondmaintheorem} and Theorem \ref{paritylongterm}.
\begin{thm}\label{thirdmaintheoremreformulation}
	Suppose that $\epsilon>0$. Then the set \begin{equation*}
		\left\{m\in\positivenaturals\mid\forall 0\leq k\leq \frac{3\log_2m}{2-\log_23}:  \left(\sqrt[3]{\frac{3}{4}}\right)^{k}m^{1-\epsilon}\leq \Col^k(m)\leq \left(\sqrt[3]{\frac{3}{4}}\right)^km^{1+\epsilon}\right\}\end{equation*} is \stardense.
\end{thm}
\begin{proof}
Suppose that $\epsilon>0$. Choose $\eta,\delta>0$ such that $\eta+\delta<\epsilon$.	By Theorem \ref{secondmaintheorem}, we know that the set
	\begin{equation*}
		Q=\left\{m\in\positivenaturals\mid\forall 0\leq k\leq \frac{\log_2m}{1-\log_2\sqrt{3}}:  \left(\cons\right)^km^{1-\eta}\leq \ColN^k(m)\leq \left(\cons\right)^km^{1+\eta}\right\}\end{equation*} is \stardense.
By Theorem \ref{paritylongterm}, we know that the set	$$S=\left\{m\in\positivenaturals \mid\forall 0\leq k\leq \frac{\log_2m}{1-\log_2\sqrt{3}}:  -\delta\log_2m\leq\sum_{i=0}^{k-1}p(m)_i-\frac{k}{2}\leq \delta\log_2m\right\}$$ is \stardense.
 Suppose that $m\in Q\cap S$. Note that $$\ColN^k(m)=\Col^{k+\sum_{i=0}^{k-1}p(m)_i}(m),$$ which follows easily by induction. Suppose that $ 0\leq k\leq \frac{3\log_2m}{2-\log_23}$, then  $0\leq \left\lfloor\frac{2k}{3}\right\rfloor\leq \frac{\log_2m}{1-\log_2\sqrt{3}}$. Since $m\in Q$, we get 
	\begin{equation}\label{thirdmainfirst}
		\left(\cons\right)^{\left\lfloor\frac{2k}{3}\right\rfloor}m^{1-\eta}\leq \ColN^{\left\lfloor\frac{2k}{3}\right\rfloor}(m)\leq \left(\cons\right)^{\left\lfloor\frac{2k}{3}\right\rfloor}m^{1+\eta}.
	\end{equation}
Now,
\begin{equation}\label{thirdmainsecond}
		\left(\sqrt[3]{\frac{3}{4}}\right)^km^{1-\eta}\leq\left(\cons\right)^{\left\lfloor\frac{2k}{3}\right\rfloor}m^{1-\eta},\end{equation} and
	\begin{equation}\label{thirdmain2}
		\left(\cons\right)^{\left\lfloor\frac{2k}{3}\right\rfloor}m^{1+\eta}\leq \frac{2}{\sqrt{3}}\left(\sqrt[3]{\frac{3}{4}}\right)^km^{1+\eta}.
\end{equation}
Thus, combing (\ref{thirdmainfirst}), (\ref{thirdmainsecond}), and (\ref{thirdmain2}) we obtain
\begin{equation}\label{thirdmainthird}
	\left(\sqrt[3]{\frac{3}{4}}\right)^km^{1-\eta}\leq \ColN^{\left\lfloor\frac{2k}{3}\right\rfloor}(m)\leq \frac{2}{\sqrt{3}}\left(\sqrt[3]{\frac{3}{4}}\right)^km^{1+\eta}.
\end{equation}
Similar as in Lemma \ref{bruteforce}, it is easy to see that 
\begin{equation}\label{thirdmainfourth}
\frac{m}{2^k}\leq \Col^k(m)\leq 2\cdot 2^km.
\end{equation}
Since $m\in S$, we also know that
\begin{equation}\label{thirdmainfifth}
	-\delta\log_2m\leq\sum_{i=0}^{\left\lfloor\frac{2k}{3}\right\rfloor-1}p(m)_i-\frac{\left\lfloor\frac{2k}{3}\right\rfloor}{2}\leq \delta\log_2m.
\end{equation} 
Thus,
\begin{equation}\label{thirdmainsixth}
\ColN^{\left\lfloor\frac{2k}{3}\right\rfloor}(m)=\Col^{\left\lfloor\frac{2k}{3}\right\rfloor+\sum_{i=0}^{\left\lfloor\frac{2k}{3}\right\rfloor-1}p(m)_i}(m)=\Col^{\frac{3}{2}\left\lfloor\frac{2k}{3}\right\rfloor+\left(\sum_{i=0}^{\left\lfloor\frac{2k}{3}\right\rfloor-1}p(m)_i-\frac{1}{2}\left\lfloor\frac{2k}{3}\right\rfloor\right)}(m)=\Col^{k+j}(m)
\end{equation}
for some  $-\delta\log_2m-1\leq j\leq \delta\log_2m$ by (\ref{thirdmainfifth}) and $k-1\leq\frac{3}{2}\left\lfloor\frac{2k}{3}\right\rfloor\leq k$. Considering  (\ref{thirdmainfourth}), (\ref{thirdmainsixth}), and (\ref{thirdmainthird}), we obtain
\begin{equation*}\label{thirdmaineventh}
\Col^{k}(m)\leq 2\cdot2^{|j|}\Col^{k+j}(m)\leq 2\cdot 2^{\delta\log_2m+1}\ColN^{\left\lfloor\frac{2k}{3}\right\rfloor}(m)\leq 4m^{\delta}\frac{2}{\sqrt{3}}\left(\sqrt[3]{\frac{3}{4}}\right)^km^{1+\eta}
\end{equation*}
and  
\begin{equation*}\label{thirdmaineight}
	\Col^{k}(m)\geq 2^{-|j|-1}\Col^{k+j}(m)\geq 2^{-\delta\log_2m-2}\ColN^{\left\lfloor\frac{2k}{3}\right\rfloor}(m)\geq m^{-\delta}2^{-2}\left(\sqrt[3]{\frac{3}{4}}\right)^km^{1-\eta}.
\end{equation*}
Since $\delta+\eta<\epsilon$, we obtain for any sufficiently large $m$
$$m^{\delta}\frac{8}{\sqrt{3}}m^{1+\eta}\leq m^{1+\epsilon}$$
and 
$$ m^{-\delta}2^{-2}m^{1-\eta}\geq m^{1-\epsilon}.$$
Hence, the claim follows.\end{proof}
At last, we prove the \stardensity\ab version of Theorem \ref{TheoremColfast}.
 First, define a set $S\subseteq \oddnaturals$ to be \textbf{\stardense} in $\oddnaturals$ if there exist $0<D<1$ and $C>0$ such that $$\mu_{\intervalA{1}{2N+1}\cap \oddnaturals}(S\cap\intervalA{1}{2N+1}\cap \oddnaturals )\geq 1-\frac{C}{(N+1)^D}$$ for all $N\in\N$. It is clear that such a set is of natural density $1$ in $\oddnaturals$.
\begin{thm}
	Suppose that $\epsilon>0$. Then the set
	\begin{equation*}
	\left\{m\in \oddnaturals\mid\forall 0\leq k\leq\left(\log_2\frac{4}{3}\right)^{-1}\log_2m:  \left(\frac{3}{4}\right)^{k}m^{1-\epsilon}\leq \Colfast^k(m)\leq \left(\frac{3}{4}\right)^km^{1+\epsilon}\right\}\end{equation*} is \stardense\ab in $\oddnaturals$.
\end{thm}

\begin{proof}
	Let $\epsilon>0$. Choose $\delta,\eta>0$ such that $5\delta+\eta<\epsilon$.
	By Theorem \ref{secondmaintheorem}, we know that the set
	\begin{equation*}
		Q=\left\{m\in\positivenaturals\mid\forall 0\leq k\leq \frac{\log_2m}{1-\log_2\sqrt{3}}:  \left(\cons\right)^km^{1-\eta}\leq \ColN^k(m)\leq \left(\cons\right)^km^{1+\eta}\right\}\end{equation*} is \stardense.
	By Theorem \ref{paritylongterm}, we know that the set	$$S=\left\{m\in\positivenaturals \mid\forall 0\leq k\leq \frac{\log_2m}{1-\log_2\sqrt{3}}:  -\delta\log_2m\leq\sum_{i=0}^{k-1}p(m)_i-\frac{k}{2}\leq \delta\log_2m\right\}$$ is \stardense.
	Thus, also $Q\cap S$ is \stardense. Let $0<D<1$ and $C>0$ such that $Q\cap S$ has \density{C,D}. Put $R=\positivenaturals\setminus (Q\cap S)$. Then $\#R\cap\intervalA{1}{2N+1}\leq C (2N+1)^{1-D}$. Thus, also $$\#R\cap\intervalA{1}{2N+1}\cap\oddnaturals\leq C\cdot (2N+1)^{1-D},$$ and hence, $$\#Q\cap S\cap\intervalA{1}{2N+1}\cap\oddnaturals\geq N+1- C\cdot(2N+1)^{1-D}.$$ Therefore, $$\frac{\#Q\cap S\cap\intervalA{1}{2N+1}\cap\oddnaturals}{N+1}\geq 1- C\frac{(2N+1)^{1-D}}{N+1}\geq 1-C\cdot2^{1-D}(N+1)^{-D}.$$ We have shown that $Q\cap S\cap\oddnaturals$ is \stardense\ab in $\oddnaturals$.
	
	Note that, by definition of $\Colfast$, if  $m\in\oddnaturals,$ $j\in\N$, and  $p(m)_j=1$, then 
	\begin{equation}\label{fastacc1}
		\Colfast^{\sum_{i=0}^{j-1}p(m)_i}(m)=\ColN^j(m).\end{equation} As in Lemma \ref{bruteforce}, we get for $m\in\oddnaturals$ and $j\in\N$ \begin{equation}\label{fastacc2}
		\Colfast^j(m)\leq 2^jm.\end{equation}
	
	Assume that $m\in Q\cap S\cap\oddnaturals$. Define $k_0=\left\lfloor\left(\log_2\frac{4}{3}\right)^{-1}\log_2m\right\rfloor$, $L_0=\left\lfloor\frac{\log_2m}{1-\log_2\sqrt{3}}\right\rfloor$, and $k_{\max}=\sum_{i=0}^{L_0}p(m)_i-1$.\footnote{Note that $k_{\max}$ is close to $k_0$ as we will see below, in particular, $k_{\max}\geq 0$ in case $m$ is large enough and we choose $\delta$ small enough.} Let $0\leq L\leq L_0$ be maximal with $p(m)_L=1$. Then $\sum_{i=0}^{L-1}p(m)_i=k_{\max}$ and $\Colfast^{k_{\max}}(m)=\ColN^L(m)$ by (\ref{fastacc1}). Since $m\in S$, we obtain
	  \begin{equation}\label{fastacc3}
	  -\delta\log_2m\leq\sum_{i=0}^{L_0-1}p(m)_i-\frac{L_0}{2}=k_{\max}+1-p(m)_{L_0}-\frac{L_0}{2}\leq k_{\max}-\frac{L_0}{2}+1\end{equation}
	and  $$\sum_{i=0}^{L-1}p(m)_i-\frac{L}{2}=k_{\max}-\frac{L}{2}\leq \delta\log_2m.$$
	Thus,
	\begin{equation}\label{fastacc4}
0\leq L_0-L\leq 4\delta\log_2m+2.	\end{equation}
Furthermore, it follows from the definition of $k_0$ and $L_0$ that $k_0\leq \frac{L_0}{2}$. Using (\ref{fastacc3}) we conclude
\begin{equation}\label{fastacc5}
k_0- k_{\max}\leq \delta\log_2m	+1.
\end{equation}
Suppose that $k_{\max}< k_0$ and let $k_{\max}<k\leq k_0$. Then, using (\ref{fastacc2}), (\ref{fastacc5}), $\Colfast^{k_{\max}}(m)=\ColN^L(m)$, (\ref{fastacc4}), and $m\in Q$, we obtain \begin{equation}\label{fastaccseventh}
\Colfast^k(m)\leq 2^{k-k_{\max}}\Colfast^{k_{\max}}(m)\leq 2^{\delta\log_2m+1}\ColN^L(m)\leq 2m^{\delta}2^{4\log_2m+2}\ColN^{L_0}(m)\leq  m^{5\delta}\frac{16}{\sqrt{3}}m^{\eta}.\end{equation}
Furthermore, using $k_{\max}<k$, (\ref{fastacc5}), $\frac{4}{3}<2$, $\delta<\epsilon$, and $\Colfast^k(m)\geq 1$, we get 
\begin{equation}\label{fastacc8}
	\begin{split}
&\left(\frac{3}{4}\right)^{k}m^{1-\epsilon}\leq \left(\frac{3}{4}\right)^{k_{\max}}m^{1-\epsilon}\leq \left(\frac{3}{4}\right)^{\left(\log_2\frac{4}{3}\right)^{-1}\log_2m-\delta\log_2m-2}m^{1-\epsilon}=m^{-1}\left(\frac{4}{3}\right)^{\delta\log_2m+2}m^{1-\epsilon}\\
&\leq 2^{\delta\log_2m+2}m^{-\epsilon}=4m^{\delta-\epsilon}\leq 1\leq \Colfast^k(m)\end{split}\end{equation}
if $m$ is large enough.
Thus, since $5\delta+\eta<\epsilon$, combining (\ref{fastaccseventh}) and (\ref{fastacc8}), we have shown that $$\left(\frac{3}{4}\right)^{k}m^{1-\epsilon}\leq \Colfast^k(m)\leq \left(\frac{3}{4}\right)^km^{1+\epsilon}$$
in case $k_{\max}<k\leq k_0$.

Now, suppose that $0\leq k\leq \min\left(k_0,k_{\max}\right)$. Then there exists $J\leq L_0$ with $p(m)_J=1$ and $\sum_{i=0}^{J-1}p(m)_i=k$.
Thus, $\Colfast^{k}(m)=\ColN^J(m)$ by (\ref{fastacc1}).
Now, using once more that $m\in S$, we obtain
$$ -\delta\log_2m\leq k-\frac{J}{2}=\sum_{i=0}^{J-1}p(m)_i-\frac{J}{2}\leq \delta\log_2m.$$
Thus,
$$-2\delta\log_2m\leq J-2k\leq 2\delta\log_2m.$$
Hence, by Lemma \ref{bruteforce}, we get
\begin{equation}\label{fastversionx}
	m^{-2\delta}\ColN^{2k}(m)=2^{-2\delta\log_2m}\ColN^{2k}(m)\leq \ColN^J(m)\leq 	2^{2\delta\log_2m}\ColN^{2k}(m)=m^{2\delta}\ColN^{2k}(m).\end{equation}
Since $m\in Q$, we conclude
$$ m^{1-\eta}\left(\frac{3}{4}\right)^k\leq \ColN^{2k}(m)\leq  m^{1+\eta}\left(\frac{3}{4}\right)^k,$$ and with (\ref{fastversionx}) we get
\begin{equation*}\label{fastversion3}
	 m^{1-\eta-2\delta}\left(\frac{3}{4}\right)^k\leq \ColN^{J}(m)\leq  m^{1+\eta+2\delta}\left(\frac{3}{4}\right)^k.
\end{equation*}
Thus, since $\Colfast^{k}(m)=\ColN^{J}(m)$ and $\eta+2\delta<\epsilon$,
\begin{equation*}
	  m^{1-\epsilon}\left(\frac{3}{4}\right)^k\leq m^{1-\eta-2\delta}\left(\frac{3}{4}\right)^k\leq \Colfast^{k}(m)\leq m^{1+\eta+2\delta}\left(\frac{3}{4}\right)^k\leq m^{1+\epsilon}\left(\frac{3}{4}\right)^k,
\end{equation*}
which concludes the proof.
\end{proof}
\begin{rem}
	The methods of this paper are not restricted to the analysis of \ab$\ColN$ as there are maps that behave similar to it. As an example, take a tuple $\overline{a}=(p,q_0,...,q_{p-1},k_0,...,k_{p-1})$ such that $p\in\positivenaturals$, $q_i\in\positivenaturals$, $k_i\in\Z$, and $q_ii+k_i\equiv 0\mod p$. Define $$\ColN_{\overline{a}}:\Z\rightarrow\Z;\ab m\mapsto \frac{q_im+k_i}{p}\text{\hspace{10pt} if\hspace{10pt}} m\equiv i \mod p.$$ (To the author's knowledge  maps of this form were first introduced in \cite{Conway72}). Heuristically, $\ColN_{\overline{a}}$ is decreasing on average if $\frac{\sqrt[p]{\Pi_{i=0}^{p-1}q_i}}{p}<1$. In this case, it is possible to adjust the methods of this paper to prove an analogue of Theorem \ref{main}.
\end{rem}

\subsection*{Acknowledgments}
I thank Jeffrey Lagarias for pointing out relevant references and suggestions to improve the exposition of the introduction. Furthermore, I thank Tung Nguyen, Claudius Röhl, and Natalie Weißenböck for useful comments on earlier versions of this paper.
			\bibliography{Voll}
		\bibliographystyle{plain}
		\textit{email address:} Manuel.Inselmann@gmx.de
\end{document}